\newtheorem{thm}{Theorem}
\newtheorem{prop}[thm]{Proposition}
\newtheorem{dfn}[thm]{Definition}
\newtheorem{lemm}[thm]{Lemma}
\def\be{\begin{equation}}
\def\ee{\end{equation}}
\def\R{\mathbb{R}}
\def\N{\mathbb{N}}
\def\Ri{\mathbb{R}\cup \{+\infty\}}
\def\pa{\partial }
\def\dom{\mathrm{dom}\,}
\def\lip{\mathrm{Lip}\,}
\def\l{\langle}
\def\r{\rangle}
\def\eps{\varepsilon}
\def\la{\lambda}
\def\ol{\overline}
\def\le{\leq}
\def\ge{\geq}
\def\be {\begin{equation}}
\def\ee {\end{equation}}
\def\ba {\begin{array}}
\def\ea {\end{array}}
\def\pa{\partial}
\def\dom{\mathrm{dom}\,}
\def\plr{\mathrm{plr}}
\title{Subdifferential determination of a primal lower regular function on a Banach space}
\author{M. Ivanov\thanks{Radiant Life Technologies Ltd., Nicosia, Cyprus, e-mail:milen@radiant-life-technologies.com}, M. Konstantinov\thanks{Faculty of Mathematics and Informatics, Sofia University,   5, James Bourchier Blvd, 1164 Sofia, Bulgaria, e-mail:matey@fmi.uni-sofia.bg},  N. Zlateva\thanks{Faculty of Mathematics and Informatics, Sofia University,   5, James Bourchier Blvd, 1164 Sofia, Bulgaria, e-mail:zlateva@fmi.uni-sofia.bg}}
\date{}
\begin{document}

\maketitle

\begin{abstract}
    We generalize to Banach space Thibault-Zagrodny Theorem that if $f$ and $g$ are primal lower regular functions and $\partial f = \partial g$ locally, then $f$ and $g$ locally differ by a constant.

    Our method consists in reduction to slopes considerations.
    \\[0.2cm]
 \textsl{2020 MSC}: 49J52, 47J22\\
 \textsl{Key words}: primal lower regular function, slope, determination of a function
\end{abstract}

\section{Introduction}\label{sec:intro}
		The primal lower regular functions are introduced  by R. Poliquin in
	\cite{poliq} under the name primal lower nice functions, for a modern account see \cite[Chapter 11]{thibault-book}. They are a useful abstraction of the framework for non-convex
	optimization proposed by R.T. Rockafellar, see e.g. \cite{rock-tams}, that is, the compositions of
	convex lower semicontinuous and twice smooth functions. The latter framework is
	really apt for the purpose, however doing calculus directly with those so called
	\emph{convexly composite} functions, e.g. \cite[11.1.2]{thibault-book} is cumbersome, because of having to account for two
	maps. In this respect primal lower regular functions come in handy.
	
	In \cite{poliq} R. Poliquin shows that primal lower regular functions share many properties with the convex
	functions, including a local version of Moreau-Rockafellar integrability theorem. Namely,
	if the subdifferentials of two primal lower regular functions coincide in a neighbourhood of some point,
	then in a neighbourhood of  that point the functions differ by a constant. In \cite{poliq} it was proved
	in finite dimensions and later in \cite{TZ} it was extended to Hilbert space setting, see also \cite[p.1113]{thibault-book}. These proofs, however, rely
	on the special properties of Moreau-Yousida regularization which allow transferring the subdifferential
	equality to the regularized functions, when the norm is Hilbertian. This does not work in a general Banach space.

As the title suggests,
	this work is devoted to extending this result to a general Banach space setting.
	
	The problem of extending the subdifferential determination of primal lower regular functions to Banach spaces is surprisingly tough.
	One of the reasons being that the domain of a primal lower regular function could potentially be quite irregular: only approximate convexity
	can be perhaps claimed \cite{mat} and even that is not fully established.
	
	The new method we employ here is based on \emph{slopes}, see e.g. \cite{iz-slopes} and the references therein, and
	it works, because it does not require linear (or convex) structure, so the irregularity of the domain is not such an issue.

	Let us recall the definition of a primal lower regular function.
	\begin{dfn}[cf. \cite{thibault-book}, Definition~11.1]
		Let $(E,\|\cdot\|)$ be a Banach space. Let the function $f:E\to\Ri$ be proper and lower semicontinuous. We say that $f$ is \emph{primal lower regular at} $\bar x\in E$ with coefficient $c>0$ and radius $\delta>0$, and write
		$$
			\bar x \in \plr_{c,\delta} f,
		$$
		if
		\begin{equation}
			\label{eq:plr-def}
			f(y) \ge f(x) + p(y-x) - c(1+\|p\|)\|y-x\|^2,
		\end{equation}
		for all $x,y\in B^\circ(\bar x;\delta)$, and all $p\in\partial f(x)$.
	
		The function $f$ is called \emph{primal lower regular} if
		$$
			\bigcup_{c,\delta>0} \plr_{c,\delta} f = E.
		$$
	\end{dfn}

We will now formulate our main result.
	\begin{thm}
		\label{thm:main}
		Let $(E,\|\cdot\|)$ be a Banach space. Let the functions $f,g:E\to\Ri$ be proper and lower semicontinuous. Let
		$$
			\bar x \in \plr _{c,\delta} f \cap \plr_{c,\delta}g \cap \dom f,
		$$
		for some $c,\delta > 0$. Let
		\begin{equation}
			\label{eq:subd=}
			\partial f(x) = \partial g(x),\quad \forall x \in B^\circ(\bar x, \delta).
		\end{equation}
		Then there is a constant $a\in\R$ such that
		\begin{equation}
			\label{eq:main-determ}
			f(x) = g(x) + a,\quad \forall x \in B^\circ(\bar x, \hat \delta),
		\end{equation}
		where
		\begin{equation}
			\label{eq:def-hat-del}
			\hat \delta :=\min\left\{\frac{\delta}{2}, \frac{1}{18c} \right\} .
		\end{equation}
	\end{thm}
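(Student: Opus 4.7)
The plan is to reduce (\ref{eq:main-determ}) to showing that the difference $h := f - g$, well defined on $B^\circ(\bar x, \hat\delta)$, has vanishing strong descent slope in both orientations: $|\nabla h| \equiv |\nabla(-h)| \equiv 0$. Granted continuity of $h$ on this convex open ball, a standard Ekeland-type mean-value argument then forces $h$ to be constant. The merit of working with slopes rather than attempting subgradient calculus on $h$ directly is precisely that slopes do not see whatever irregularity $\dom f$ may carry, and no linear or convex structure on the domain is needed.

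As a preliminary I would show $\dom f \cap B^\circ(\bar x, \delta) = \dom g \cap B^\circ(\bar x, \delta)$ and continuity of $f, g$ there. Using Ekeland applied to $f + \eta\|\cdot - \bar x\|$ one obtains a dense set of points at which $\partial f$ is nonempty with small-norm elements; hypothesis (\ref{eq:subd=}) transfers these into $\partial g$, so $g$ is finite there, and the plr estimate (\ref{eq:plr-def}) for $g$ together with its lower semicontinuity extends finiteness (and a local Lipschitz bound) of $g$ to the whole ball, including $\bar x$.

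The core step is to translate the shared subdifferential $\partial f(x) = \partial g(x)$ into a slope equality for $h$. Writing (\ref{eq:plr-def}) in both directions for both $f$ and $g$ at a pair $x, y$, and using a \emph{shared} $q \in \partial f(x) = \partial g(x)$ and $p \in \partial f(y) = \partial g(y)$, one gets after subtracting
\be
|h(x) - h(y)| \le \|p - q\|\,\|x - y\| + c\bigl(2 + \|p\| + \|q\|\bigr)\|x - y\|^2.
\ee
To turn this into a slope bound at a given $x_0 \in B^\circ(\bar x, \hat\delta)$, fix $\eta > 0$, pick $q \in \partial f(x_0)$ of small norm (available on a dense set by Step 1 and extended by a limiting argument), and apply Ekeland to $f + \eta\|\cdot - x_0\|$ on a tiny closed ball around $x_0$; the near-minimizer $z$ is close to $x_0$ and, via (\ref{eq:plr-def}) at $z$, carries a subgradient $p \in \partial f(z) = \partial g(z)$ with $\|p\| \le \|q\| + O(\eta)$. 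The radius restriction $\hat\delta \le 1/(18c)$ is then exactly what is needed to absorb the quadratic remainder $c(2+\|p\|+\|q\|)\|x_0-z\|^2$ into the linear Ekeland term, yielding $(h(x_0) - h(y))^+ \le \eta\|y - x_0\| + o(\|y - x_0\|)$ and hence $|\nabla h|(x_0) = 0$ as $\eta \to 0$. A symmetric argument with the roles of $f$ and $g$ swapped gives $|\nabla(-h)|(x_0) = 0$.

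The principal obstacle is the subgradient extraction in a general Banach space. Ekeland's principle by itself produces only near-minimizers and not Fréchet-type subgradients; without a smooth renorming, the usual fuzzy sum rule is unavailable, and the plr inequality (\ref{eq:plr-def}) must itself stand in for smoothness. Moreover, since $-g$ is not plr, every comparison has to be routed through a subgradient $p$ shared by $f$ and $g$ at the \emph{same} base point, which is what makes hypothesis (\ref{eq:subd=}) indispensable. The explicit constant $1/(18c)$ in the definition of $\hat\delta$ is the bookkeeping cost of balancing Ekeland's perturbation parameter $\eta$ against the $(1+\|p\|)$ coefficient in the plr remainder, uniformly as $y$ ranges over a neighborhood of $x_0$.
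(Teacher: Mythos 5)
Your two--point inequality is correct as far as it goes: adding \eqref{eq:plr-def} for $f$ at $x$ (with $q\in\partial f(x)=\partial g(x)$) to \eqref{eq:plr-def} for $g$ at $y$ (with $p\in\partial f(y)=\partial g(y)$) does give the stated bound on $|h(x)-h(y)|$, and the Ekeland--plus--sum-rule extraction of small subgradients is essentially Proposition~\ref{pro:subdf-to-slope}. But the reduction to ``$|\nabla h|\equiv|\nabla(-h)|\equiv 0$ on a convex ball, hence $h$ constant'' breaks on the geometry of $\dom f$, which is exactly the obstruction flagged in the introduction. A primal lower regular function need not be finite on any neighbourhood of $\bar x$ (every proper lsc convex function is plr, e.g.\ an indicator function), so $h=f-g$ is defined only on $\dom f\cap B^\circ(\bar x,\hat\delta)$, a set that can be non-convex and even disconnected; the segment/mean-value argument that turns vanishing two-sided slopes into constancy has nowhere to run. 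Your preliminary claim that the plr estimate ``extends finiteness (and a local Lipschitz bound) of $g$ to the whole ball'' is also backwards: \eqref{eq:plr-def} gives \emph{lower} bounds on $g$ at other points, so it can never certify that $g$ is finite there.

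The slope computation itself also does not close. $|\nabla h|(x_0)$ is a $\limsup$ over \emph{all} $y\to x_0$ in $\dom h$, whereas your estimate needs a subgradient at $y$ as well as at $x_0$: from \eqref{eq:plr-def} at $x_0$ alone you get an upper bound on $f(x_0)-f(y)$ but only a lower bound on $g(y)-g(x_0)$, which is the wrong direction for bounding $h(x_0)-h(y)$ from above. Ekeland produces one well-behaved point $z$ near $x_0$, not control at the generic $y$ in the $\limsup$, and $\dom\partial f$ is only (graphically) dense in $\dom f$. The paper's proof is built precisely to avoid both issues: it tilts and penalizes, $f_1=\alpha f-p(\cdot)+4\|\cdot-\bar x\|$, to create a \emph{sharp} minimum at $\bar x$ (Lemma~\ref{lem:plr-to-sharp}), restricts to a sublevel set to obtain a complete metric space, converts the subdifferential inclusion \eqref{eq:subd=} into the one-sided slope inequality $|\nabla f_1|\ge|\nabla g_1|$ via $F$-regularity, and then replaces your segments by discrete descent orbits in $\dom|\nabla f_1|$ driven to $\bar x$ by the LOEV principle (Theorem~\ref{thm:loev}, Proposition~\ref{pro:slop-main}). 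Some such substitute for connectedness of the domain is the missing idea in your proposal.
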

	Here $\partial$ denotes the  Clarke-Rockafellar subdifferential, see Section~\ref{sec:clarke} for a brief review of the latter.

	This work is organized as follows. In Section~\ref{sec:slopes} we develop the slope techniques we need. In Section~\ref{sec:clarke} we recall what we use from the theory of Clarke-Rockafellar subdifferential and retrieve how subdifferential inclusions imply slopes inequalities. Finally, in Section~\ref{sec:plr} we prove the main result.

\section{Slope regularity}\label{sec:slopes}
Let $(X,d)$ be  a complete metric space. Functions like $g:X\to\Ri$ are assumed proper and lower semicontinuous.

For a function $g:X\to\R$ and a convex set $U\subset X$ define
$$
    \lip g (U) := \sup\left\{\frac{|g(y)-g(x)|}{d(y,x)}:\ x\neq y\in U\right\},
$$
and for $x\in X$
$$
    \lip g (x) := \inf_{\delta>0} \lip g(B(x;\delta)).
$$
Estimating the Lipschitz constant of the kind of functions will be using, is straightforward.
\begin{lemm}
    \label{lem:esitm-lip}
    Let $x_0\in X$, and
    $$
        g(x) := \varphi (d(x,x_0)),
    $$
    where $\varphi:\R^+\to\R^+$ is a convex, increasing smooth function such that $\varphi(0) {=} 0$. Then
    $$
        \lip g (x) \le \varphi'(d(x,x_0)),\quad\forall x\in X.
    $$
\end{lemm}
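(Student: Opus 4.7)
The plan is to bound $\lip g(B(x;\delta))$ for small $\delta>0$ and then pass to the infimum. The two ingredients are the triangle inequality for $d$ (to control the argument of $\varphi$) and the mean value theorem for $\varphi$ (to convert an increment of $\varphi$ into a multiple of $\varphi'$).

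Fix $x\in X$ and $\delta>0$. For any $u,v\in B(x;\delta)$ the triangle inequality gives
$$
    |d(u,x_0)-d(v,x_0)|\le d(u,v),
$$
and both $d(u,x_0)$ and $d(v,x_0)$ lie in the interval $[\,0,\ d(x,x_0)+\delta\,]$ (using $d(u,x_0)\le d(u,x)+d(x,x_0)$ and the analogous bound for $v$). Since $\varphi$ is smooth, the one-dimensional mean value theorem applied on the segment joining $d(u,x_0)$ and $d(v,x_0)$ yields some $\xi$ in that segment with
$$
    |g(u)-g(v)| = |\varphi(d(u,x_0))-\varphi(d(v,x_0))| = \varphi'(\xi)\,|d(u,x_0)-d(v,x_0)|.
$$
Convexity of $\varphi$ makes $\varphi'$ non-decreasing, and since $\varphi$ is increasing we have $\varphi'\ge 0$; hence $\varphi'(\xi)\le \varphi'(d(x,x_0)+\delta)$. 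Combining with the triangle-inequality bound,
$$
    \frac{|g(u)-g(v)|}{d(u,v)} \le \varphi'(d(x,x_0)+\delta),
$$
so $\lip g(B(x;\delta))\le \varphi'(d(x,x_0)+\delta)$.

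Taking the infimum over $\delta>0$ and using continuity of $\varphi'$ (again a consequence of smoothness) gives
$$
    \lip g(x) = \inf_{\delta>0}\lip g(B(x;\delta))\le \lim_{\delta\to 0^+}\varphi'(d(x,x_0)+\delta) = \varphi'(d(x,x_0)),
$$
which is the desired inequality.

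There is no real obstacle here; the only point that needs care is the monotonicity of $\varphi'$ that lets us replace $\varphi'(\xi)$ at an unknown intermediate point by a value we control, and this follows immediately from the convexity hypothesis on $\varphi$. The tacit reading of ``convex set'' in the definition of $\lip g(U)$ as applied to balls $B(x;\delta)$ in a general metric space is unproblematic because the estimate above holds for \emph{any} pair $u,v\in B(x;\delta)$, regardless of whether the ball is geodesically convex.
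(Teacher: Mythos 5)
Your proof is correct and follows essentially the same route as the paper's: both arguments combine the triangle inequality $|d(u,x_0)-d(v,x_0)|\le d(u,v)$ with the monotonicity of $\varphi'$ (from convexity) to bound the local Lipschitz constant on a ball, and then use smoothness of $\varphi$ to pass to the infimum. The only cosmetic difference is that you work with balls centered at $x$ while the paper uses balls centered at $x_0$ containing $x$; the mean value theorem step you invoke plays exactly the role of the paper's identity $\lip\varphi([0,\delta])=\varphi'(\delta)$.
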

\begin{proof}
    Fix a $x\in X$ and a $\delta > d(x,x_0)$.

    We have that $\varphi'$ is positive and increasing. Therefore,
    $$
        \lip \varphi([0,\delta]) = \varphi'(\delta).
    $$
    So, if $x,y\in B(x_0;\delta)$, that is, $d(x,x_0),d(y,x_0)\in[0,\delta]$, then
    $$
        |g(y) - g(x)| \le \varphi'(\delta) |d(y,x_0)-d(x,x_0)| \le \varphi'(\delta) d(y,x),
    $$
    from the triangle inequality. So,
    $$
        \lip g(B(x_0;\delta)) \le \varphi'(\delta).
    $$
    But $\lip g(x) \le  \lip g(B(x_0;\delta))$ and $\displaystyle \inf_{\delta > d(x,x_0)}\varphi'(\delta) = \varphi'(d(x,x_0))$, because $\varphi$ is smooth.
\end{proof}

The \emph{local slope} of a function $f:X\to \Ri$ at $x\in\dom f$ is
$$
    |\nabla f|(x) = \limsup_{y\to x\atop y\neq x}\frac{[f(x)-f(y)]^+}{d(x,y)},
$$
where
$[t]^+ := \max\{0,t\} = (t+|t|)/2$. Therefore,  $|\nabla f|(x) = 0$ if $x$ is a local minimum to $f$, and if not
$$
    |\nabla f|(x) = \limsup_{y\to x\atop y\neq x}\frac{f(x)-f(y)}{d(x,y)}.
$$
We explicit that $|\nabla f|(x) = 0$ at the isolated points of $\dom f$, where the limit above is undefined.
This notion was introduced by De Giorgi, Marino and Tosques~\cite{GMT} and used thereafter for study of concepts as descent curves (e.g. \cite{DIL}),    error bounds (e.g. \cite{AC}),   subdifferential calculus and metric regularity (e.g. the monograph~\cite{I} and  references therein).
For determination of functions,   local slopes recently appear as an useful tool, see e.g. \cite{DLS,DMS,DS,iz-slopes},  as well as, in the convex case subgradients of minimal norm, see \cite{ASV}, and global slopes, see \cite{TZgs}.

We may consider $|\nabla f|$ as a function from $\dom f$ to $[0,\infty]$. Moreover,
\[
|\nabla (rf)|(x)=r|\nabla f|(x),\quad \forall r\ge 0, \ \forall x\in \dom f.
\]

In some sense, not a strict one, the local slope $|\nabla f|(x)$ can be regarded as an one-sided, or \emph{unilateral}, analogy of $\lip f(x)$. Formally, $|\nabla f|(x)$ is an unilaterisation of $\displaystyle \limsup_{y\to x}|f(x)-f(y)|/d(x,y)$. What we will be using is the following relationship.
\begin{lemm}
    \label{lem:nabla<lip}
    Let $f:X\to\R$ be proper and lower semicontinuous and let $g:X\to\R$ be locally Lipschitz. If $f+g$ has a local minimum at $x_0$ then
    \begin{equation}
        \label{eq:nabla<lip}
        |\nabla f|(x_0) \le \lip g(x_0).
    \end{equation}
\end{lemm}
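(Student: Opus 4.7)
The plan is to read the local minimum condition as a one-sided Lipschitz bound for $f$ in terms of $g$, and then take the correct limsup.

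First I would observe that, since $g$ is finite and locally Lipschitz and $f+g$ attains a local minimum at $x_0$, necessarily $f(x_0)$ is finite, so $x_0\in\dom f$ and the slope $|\nabla f|(x_0)$ is well-defined (with the convention that it vanishes if $x_0$ is isolated in $\dom f$, in which case the conclusion is immediate). Fix a neighbourhood $U$ of $x_0$ on which $f(x_0)+g(x_0)\le f(y)+g(y)$ for every $y\in U$. Rearranging gives
\[
f(x_0)-f(y)\le g(y)-g(x_0),\qquad \forall y\in U,
\]
hence
\[
[f(x_0)-f(y)]^+\le [g(y)-g(x_0)]^+\le |g(y)-g(x_0)|,\qquad \forall y\in U.
\]

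Next, for any $\delta>0$ small enough that $B(x_0;\delta)\subset U$, and for every $y\in B(x_0;\delta)\setminus\{x_0\}$, dividing by $d(x_0,y)$ and using the definition of $\lip g(B(x_0;\delta))$ yields
\[
\frac{[f(x_0)-f(y)]^+}{d(x_0,y)}\le \frac{|g(y)-g(x_0)|}{d(x_0,y)}\le \lip g(B(x_0;\delta)).
\]
Passing to the limsup as $y\to x_0$ (with $y\neq x_0$) keeps this inequality, so
\[
|\nabla f|(x_0)\le \lip g(B(x_0;\delta)).
\]

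Finally I would take the infimum over admissible $\delta>0$; by the very definition of $\lip g(x_0)$ this gives $|\nabla f|(x_0)\le \lip g(x_0)$, which is \eqref{eq:nabla<lip}. There is no serious obstacle: the only point deserving a brief comment is the isolated-point case, handled by the convention stated just before the lemma, and the fact that local Lipschitz continuity of $g$ guarantees $\lip g(B(x_0;\delta))<\infty$ for all small enough $\delta$, so the infimum is meaningful.
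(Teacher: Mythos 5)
Your proof is correct and follows essentially the same route as the paper's: the local minimum inequality is rearranged into the one-sided bound $f(x_0)-f(y)\le \lip g(B(x_0;\delta))\,d(y,x_0)$ on a small ball, after which one takes the limsup defining $|\nabla f|(x_0)$ and then the infimum over $\delta$. The only cosmetic difference is that the paper first normalizes $f(x_0)=g(x_0)=0$, which you avoid by carrying $g(y)-g(x_0)$ explicitly.
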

\begin{proof}
    We can assume without loss of generality that $f(x_0)=g(x_0)=0$. So, there is $\varepsilon>0$ such that
    $$
        f(x) + g(x) \ge 0,\quad\forall x\in B(x_0;\varepsilon).
    $$
    Then for each $\delta\in(0,\varepsilon)$
    $$
        f(x) \ge -g(x) \ge -\lip g (B(x_0;\delta))d(x,x_0),\quad\forall x\in B(x_0;\delta).
    $$
    By definition
    $$
        |\nabla f|(x_0) \le \lip g (B(x_0;\delta)),
    $$
    and taking infimum over $\delta\in (0,\varepsilon)$, we get \eqref{eq:nabla<lip}.
\end{proof}

Later on we will be using the following simple observation.
\begin{lemm}
    \label{lem:nabla-restrict}
    Let $f$ be proper and lower semicontinuous and let $x_0\in\dom f$. Consider the sublevel set
    $$
        Y := \{x\in X:\ f(x) \le f(x_0)\},
    $$
    and the restriction of $f$ to it
    $$
        \ol f  := f \upharpoonright Y.
    $$
    Then
    \begin{equation}
        \label{eq:nabla-restrict}
        |\nabla f|(x) = |\nabla \ol f|(x),\quad\forall x\in Y.
    \end{equation}
\end{lemm}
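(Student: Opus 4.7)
The plan is to prove the equality $|\nabla f|(x) = |\nabla \ol f|(x)$ for $x\in Y$ by establishing the two inequalities separately. The heart of the matter is a single observation: the positive part $[f(x)-f(y)]^+$ is nontrivial only when $f(y)<f(x)$, and since $x\in Y$ means $f(x)\le f(x_0)$, any such $y$ automatically satisfies $f(y)<f(x_0)$, hence $y\in Y$.

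First, I would dispatch the inequality $|\nabla \ol f|(x)\le |\nabla f|(x)$. This is essentially immediate from the definitions: the limsup defining $|\nabla\ol f|(x)$ is taken over $y\to x$ with $y\in Y$, while the limsup defining $|\nabla f|(x)$ is taken over the larger set $y\in X$. Since $\ol f=f$ on $Y$, the same quantities are being compared over a smaller versus a larger indexing set, and the limsup over the smaller set is no greater.

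For the reverse inequality $|\nabla f|(x)\le |\nabla \ol f|(x)$, I would first reduce to the case $|\nabla f|(x)>0$, since the other case is trivial. Given $|\nabla f|(x)>0$, I would extract a sequence $y_n\to x$, $y_n\ne x$, with $(f(x)-f(y_n))/d(x,y_n)\to |\nabla f|(x)$ and with each numerator positive. By the observation above, each such $y_n$ must lie in $Y$, and hence the same sequence is admissible for computing $|\nabla \ol f|(x)$, giving the desired bound.

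The only subtlety I foresee is handling the edge cases in the definition. If $x$ is isolated in $Y$ but not in $\dom f$, then $|\nabla\ol f|(x)=0$ by convention, while the limsup defining $|\nabla f|(x)$ is computed over a genuinely nondegenerate set. In that situation, however, a small ball around $x$ meets only $y\in X\setminus Y$, which satisfy $f(y)>f(x_0)\ge f(x)$ and therefore contribute $0$ to the limsup, forcing $|\nabla f|(x)=0$ as well. Beyond such conventions the argument requires no real work, so I do not expect any genuine obstacle.
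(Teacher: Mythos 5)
Your proposal is correct and rests on exactly the observation the paper uses: the only points $y$ that contribute to the slope at $x\in Y$ are those with $f(y)\le f(x)\le f(x_0)$, and these automatically lie in $Y$ (the paper packages this by rewriting $|\nabla f|(x)$ as an $\inf$--$\sup$ restricted to $\{y: f(y)\le f(x)\}$, while you run the same point through a maximizing sequence). Your handling of the degenerate cases, including $x$ isolated in $Y$, is also sound, so there is nothing to fix.
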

\begin{proof}
    The definition of $|\nabla f|(x)$ can be rewritten as
    $$
        |\nabla f|(x) = \inf_{\varepsilon>0} \sup\left\{\frac{f(x)-f(y)}{d(x,y)}:\ y\in B(x;\varepsilon),\ f(y) \le f(x) \right\}.
    $$
    Obviously, if $x\in Y$, then $f(y) \le f(x)$ implies $y\in Y$ as well, and this gives~\eqref{eq:nabla-restrict}.
\end{proof}

Recall the famous Ekeland Variational Principle, eg.
\cite[p.45]{phelps}, or \cite[p.198]{thibault-book}.
\begin{thm}[Ekeland]
    \label{thm:eke}
    If $f:X\to\Ri$ is proper, lower semicontinuous, and bounded below, and $x_0\in\dom f$, then for each $\lambda>0$ there exists $x_\lambda$ such that
    \begin{equation*}
        \label{eq:eke-decr}
        \lambda d(x_\lambda,x_0) \le f(x_0) - f(x_\lambda),
    \end{equation*}
    \begin{equation*}
        \label{eq:eke-min}
        f(x) + \lambda d(x_\lambda,x) > f(x_\lambda), \quad\forall x\neq x_\lambda.
    \end{equation*}
\end{thm}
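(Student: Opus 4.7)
The plan is to construct $x_\lambda$ as a minimal element of the partial order induced by $f$ and the weighted metric. Define on $\dom f$ the relation
$$
y \preceq x \ \iff \ f(y) + \lambda d(x,y) \le f(x).
$$
A quick check (reflexivity trivial, transitivity by adding the two defining inequalities and applying the triangle inequality, antisymmetry from $\lambda>0$) shows this is indeed a partial order on $\dom f$.

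Starting from the given $x_0$, I would inductively choose $x_{n+1}$ in the future set $S_n := \{x:\ x\preceq x_n\}$ so that $f(x_{n+1}) \le \inf_{S_n} f + 2^{-n}$. Since $x_{n+1}\preceq x_n$ forces $f(x_n) - f(x_{n+1}) \ge \lambda d(x_n,x_{n+1}) \ge 0$ and $f$ is bounded below, the telescoping sum $\sum_n \lambda d(x_n,x_{n+1})$ is finite, so $(x_n)$ is Cauchy. Completeness of $X$ then delivers a limit $x_\lambda$, and passing to the limit $k\to\infty$ in $f(x_k) + \lambda d(x_n,x_k) \le f(x_n)$ via lower semicontinuity of $f$ and continuity of $d(x_n,\cdot)$ gives $x_\lambda \preceq x_n$ for every $n$. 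In particular $x_\lambda \preceq x_0$, which is precisely the first Ekeland inequality.

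The remaining task is to verify that $x_\lambda$ is minimal for $\preceq$, i.e.\ that $y\preceq x_\lambda$ implies $y = x_\lambda$. Given such a $y$, transitivity yields $y\preceq x_n$ for all $n$, hence $y\in S_n$ and $f(x_{n+1}) \le f(y) + 2^{-n}$. Combined with $y\preceq x_{n+1}$, i.e.\ $f(y) + \lambda d(x_{n+1},y) \le f(x_{n+1})$, this gives $\lambda d(x_{n+1},y) \le 2^{-n}$, so $x_{n+1}\to y$, and by uniqueness of limits $y=x_\lambda$. Any $x\ne x_\lambda$ therefore fails $x\preceq x_\lambda$, which is exactly the second conclusion $f(x) + \lambda d(x_\lambda, x) > f(x_\lambda)$.

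The only delicate point is calibrating the tolerances $2^{-n}$ so that the chain is genuinely Cauchy; once that is set up correctly, completeness of $X$ and lower semicontinuity of $f$ do all the work. No linear or convex structure is used, matching the abstract complete metric space setting of Section~\ref{sec:slopes}.
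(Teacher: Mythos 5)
Your proof is correct. Note that the paper does not prove this statement at all: Ekeland's Variational Principle is recalled as a known result with references to Phelps and to Thibault's book, so there is no in-paper argument to compare against. Your construction --- the Brøndsted-type partial order $y\preceq x \iff f(y)+\lambda d(x,y)\le f(x)$, the iterative choice of $x_{n+1}$ as a near-minimizer of $f$ on the lower section $S_n$, Cauchyness from the telescoping bound $\sum_n \lambda d(x_n,x_{n+1})\le f(x_0)-\inf f$, and minimality of the limit --- is precisely the classical proof found in those references, and all the steps check out (in particular, the trivial case $x\notin\dom f$ in the second inequality and the finiteness of $f(x_\lambda)$ are handled correctly by the order relation). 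One small remark: the ``delicate point'' is not the Cauchyness of the chain, which follows from boundedness below alone regardless of the tolerances; the tolerances $2^{-n}$ (any summable, or even just null, sequence works) are only needed in the minimality step, where $\lambda d(x_{n+1},y)\le 2^{-n}$ forces $y=x_\lambda$.
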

\begin{lemm}
    \label{lem:vp}
    Let $f:X\to\R$ be a proper and lower semicontinuous function, and let $g:X\to\R$ be locally Lipschitz. If $f+g$ is bounded below, then for each $\varepsilon > 0$ there exists $x_\varepsilon$ such that
    \begin{equation}
        \label{eq:eps-min}
        (f+g)(x_\varepsilon) \le \inf (f+g) + \varepsilon,
    \end{equation}
    and
    \begin{equation}
        \label{eq:vp}
        |\nabla f| (x_\varepsilon) \le \lip g (x_\varepsilon) + \varepsilon.
    \end{equation}
\end{lemm}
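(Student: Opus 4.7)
\medskip

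\noindent\textbf{Proof plan.} The shape of the statement is very reminiscent of Lemma~\ref{lem:nabla<lip}: we have a slope bound whose right-hand side is a Lipschitz constant, so it is natural to try to produce $x_\varepsilon$ as a (local) minimum of $f$ plus a locally Lipschitz perturbation and then invoke Lemma~\ref{lem:nabla<lip}. The obvious tool to manufacture such a minimum from an approximate minimum is Ekeland's Variational Principle (Theorem~\ref{thm:eke}).

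\medskip

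\noindent\textbf{Step 1: pick a near minimizer.} Since $f+g$ is proper and bounded below, choose $x_0\in\dom(f+g)=\dom f$ with
$$(f+g)(x_0) \le \inf(f+g) + \varepsilon/2.$$
Note $f+g$ is lower semicontinuous (sum of lsc and continuous), which is what Ekeland requires.

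\medskip

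\noindent\textbf{Step 2: apply Ekeland to $f+g$.} Apply Theorem~\ref{thm:eke} to the function $F:=f+g$ at the point $x_0$ with parameter $\lambda:=\varepsilon$. This produces a point $x_\varepsilon$ satisfying
$$\varepsilon\, d(x_\varepsilon,x_0) \le F(x_0) - F(x_\varepsilon),$$
and
$$F(x) + \varepsilon\, d(x_\varepsilon,x) > F(x_\varepsilon),\quad\forall x\ne x_\varepsilon.$$
The first inequality, together with $F(x_\varepsilon)\ge\inf F$ and the choice of $x_0$, gives
$$F(x_\varepsilon) \le F(x_0) \le \inf(f+g) + \varepsilon/2 \le \inf(f+g)+\varepsilon,$$
which is \eqref{eq:eps-min}.

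\medskip

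\noindent\textbf{Step 3: recognize a minimum for $f$ plus a Lipschitz function.} Rewrite the strong minimum condition as: the function
$$f \;+\; \bigl(g + \varepsilon\, d(x_\varepsilon,\cdot)\bigr)$$
attains its global (hence local) minimum at $x_\varepsilon$. The perturbation $h(x):=g(x)+\varepsilon d(x_\varepsilon,x)$ is locally Lipschitz, and by the triangle-inequality estimate for $d(x_\varepsilon,\cdot)$ one has $\lip h(x_\varepsilon) \le \lip g(x_\varepsilon) + \varepsilon$ (the distance function from a fixed point has Lipschitz constant at most $1$ everywhere, so the argument from Lemma~\ref{lem:esitm-lip} with $\varphi(t)=\varepsilon t$ applies, giving local Lipschitz constant $\le \varepsilon$ at every point).

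\medskip

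\noindent\textbf{Step 4: invoke Lemma~\ref{lem:nabla<lip}.} Since $f+h$ has a local minimum at $x_\varepsilon$ and $h$ is locally Lipschitz, Lemma~\ref{lem:nabla<lip} yields
$$|\nabla f|(x_\varepsilon) \le \lip h(x_\varepsilon) \le \lip g(x_\varepsilon) + \varepsilon,$$
which is \eqref{eq:vp}.

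\medskip

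\noindent\textbf{Main difficulty.} There is no real obstacle; the whole content is choosing the Ekeland parameter correctly so that the perturbation $\varepsilon d(x_\varepsilon,\cdot)$ contributes exactly the additive $\varepsilon$ that appears in \eqref{eq:vp}, while simultaneously keeping $x_\varepsilon$ an $\varepsilon$-minimizer (which is automatic from Ekeland's descent inequality, given that we started from an $\varepsilon/2$-minimizer). The only point where one must be careful is to check that the perturbation added by Ekeland is $\mathrm{Lip}$-small at $x_\varepsilon$, which follows from $d(x_\varepsilon,\cdot)$ being $1$-Lipschitz.
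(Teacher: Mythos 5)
Your proof is correct and follows essentially the same route as the paper: choose a near-minimizer, apply Ekeland's principle to $f+g$ with parameter $\varepsilon$, observe that $f+(g+\varepsilon d(x_\varepsilon,\cdot))$ is minimized at $x_\varepsilon$, and conclude via Lemma~\ref{lem:nabla<lip}. The only (immaterial) difference is that you start from an $\varepsilon/2$-minimizer where the paper uses an $\varepsilon$-minimizer, which is already sufficient since Ekeland's descent inequality gives $(f+g)(x_\varepsilon)\le(f+g)(x_0)$.
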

\begin{proof}
    Fix $\varepsilon > 0$ and pick a $x_0$ such that
    $$
        (f+g)(x_0) \le \inf (f+g) + \varepsilon.
    $$
    By Ekeland's Variational Principle  -- Theorem~\ref{thm:eke} -- applied for $(f+g)$ (instead of $f$), and $\eps$ (instead of $\la$), there is $x_\varepsilon$ which satisfies
    \begin{equation}
        \label{eq:eke-decr}
        \eps d(x_\eps,x_0) \le (f+g)(x_0) - (f+g)(x_\eps),
    \end{equation}
    \begin{equation}
        \label{eq:eke-min}
        (f+g)(x) + \eps d(x_\eps,x) > (f+g)(x_\eps), \quad\forall x\neq x_\eps.
    \end{equation}

    Now, \eqref{eq:eke-decr} implies $(f+g)(x_\varepsilon) \le (f+g)(x_0)$, and we get \eqref{eq:eps-min}.

    On the other hand, \eqref{eq:eke-min} means that the function
    $$
        x \to f(x) + (g(x) + \varepsilon d(x_\varepsilon,x))
    $$
    attains its minimum at $x_\varepsilon$. Obviously, $\lip (g(\cdot) + \varepsilon d(\cdot,x_\varepsilon))(x_\varepsilon) \le \lip g (x_\varepsilon) + \varepsilon$, so Lemma~\ref{lem:nabla<lip} gives \eqref{eq:vp}.
\end{proof}
\begin{dfn}
    \label{def:slope-pln}
    The function $f:X\to\Ri$ is \emph{regularly sloped of order~2 with coefficient} $c>0$ whenever for each $x\in \dom |\nabla f|$,
    \begin{equation}
        \label{eq:slope-pln}
        f(y) \ge f(x) - |\nabla f|(x)d(y,x) - c(1+|\nabla f|(x))d^2(y,x),\quad\forall y\in X .
    \end{equation}
\end{dfn}
\begin{thm}
    \label{thm:reg-slope-repr}
    Let $f:X\to\Ri$ be regularly sloped of order $2$ with coefficient $c>0$. Then for each $\bar x$ with
    $$
        0 < |\nabla f|(\bar x) < \infty
    $$
    there is a sequence $x_n\to \bar x$ such that
    \begin{equation}
        \label{eq:sr-slope}
        |\nabla f|(\bar x) = \lim_{n\to\infty}\frac{f(\bar x)-f(x_n)}{d(\bar x,x_n)},
    \end{equation}
    and
    \begin{equation}
        \label{eq:kapa-2-ord}
        |\nabla f|(x_n) - |\nabla f|(\bar x) < 2c(|\nabla f|(\bar x)+2)d(\bar x,x_n),\quad\forall n > 1/|\nabla f|(\bar x).
    \end{equation}
\end{thm}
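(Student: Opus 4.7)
The plan is to combine the Ekeland Variational Principle with the regular-slope inequality in order to produce points near $\bar x$ whose slope is essentially controlled by $|\nabla f|(\bar x)$. I would set $s:=|\nabla f|(\bar x)\in(0,\infty)$ and, for each integer $n > 1/s$, introduce
\[
g_n(y) := (s-1/n)\,d(y,\bar x) + c(s+2)\,d^2(y,\bar x), \qquad h_n := f + g_n .
\]
Combining the regular-slope inequality \eqref{eq:slope-pln} at $\bar x$ with the definition of $g_n$ yields
\[
h_n(y) \ge f(\bar x) - (1/n)\,d(y,\bar x) + c\,d^2(y,\bar x)\quad \forall y\in X,
\]
so $h_n$ is proper, lower semicontinuous and bounded below, with $h_n(\bar x) = f(\bar x)$.

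Next I would apply Theorem~\ref{thm:eke} to $h_n$ with initial point $\bar x$ and parameter $\lambda = 1/(2n)$, obtaining $x_n$ satisfying
\[
(1/(2n))\,d(x_n,\bar x) \le f(\bar x) - h_n(x_n), \qquad h_n(x)+(1/(2n))\,d(x_n,x) > h_n(x_n)\ \forall x\neq x_n .
\]
The point $x_n$ will be different from $\bar x$: the definition of the limsup furnishes, for each $n$, some $y$ with $d(y,\bar x) < 1/(4nc(s+2))$ and $f(\bar x)-f(y) > (s-1/(4n))\,d(y,\bar x)$, and direct substitution gives $h_n(y) + (1/(2n))\,d(y,\bar x) < f(\bar x) = h_n(\bar x)$, which would contradict the strict minimizing property of $\bar x$ were $x_n = \bar x$.

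The second Ekeland inequality says that $x_n$ is a global minimum of $y \mapsto f(y) + g_n(y) + (1/(2n))\,d(x_n, y)$. Invoking Lemma~\ref{lem:nabla<lip}, together with Lemma~\ref{lem:esitm-lip} applied to $\varphi(r) = (s-1/n)r + c(s+2)r^2$ (smooth, convex, increasing, vanishing at $0$ since $s-1/n > 0$) and to $r \mapsto (1/(2n))r$, then gives
\[
|\nabla f|(x_n) \le (s-1/n) + 2c(s+2)\,d(x_n,\bar x) + 1/(2n) = s - \tfrac{1}{2n} + 2c(s+2)\,d(x_n,\bar x),
\]
which is strictly less than $s + 2c(s+2)\,d(x_n,\bar x)$, establishing \eqref{eq:kapa-2-ord}.

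Finally, unfolding the first Ekeland inequality and combining it with the regular-slope estimate at $\bar x$ applied to $y = x_n$ yields the two-sided bound
\[
s - \tfrac{1}{2n} + c(s+2)\,d(x_n,\bar x) \;\le\; \frac{f(\bar x)-f(x_n)}{d(\bar x,x_n)} \;\le\; s + c(1+s)\,d(x_n,\bar x) .
\]
Comparing the extremes forces $c\,d(x_n,\bar x) \le 1/(2n)$, so $x_n \to \bar x$, and then both bounds tend to $s$, which is \eqref{eq:sr-slope}. The main subtlety is coordinating the Ekeland tolerance with the coefficients in $g_n$: the quadratic weight $c(s+2)$ is chosen strictly above the $c(1+s)$ supplied by the regular-slope inequality, and the surplus $c$ simultaneously absorbs the $1/n$ slack yielding the $d(x_n,\bar x)\le 1/(2nc)$ control and the $1/(2n)$ Ekeland tolerance, producing strict inequality in \eqref{eq:kapa-2-ord} without any extra error term.
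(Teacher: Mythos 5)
Your proof is correct and follows essentially the same route as the paper: the identical perturbation $g_n(y)=(s-1/n)\,d(y,\bar x)+c(s+2)\,d^2(y,\bar x)$, Ekeland's principle, and the Lipschitz estimate of Lemma~\ref{lem:esitm-lip} fed into Lemma~\ref{lem:nabla<lip} to bound $|\nabla f|(x_n)$. The only difference is bookkeeping: you anchor Ekeland at $\bar x$ with tolerance $1/(2n)$ and rule out $x_n=\bar x$ by exhibiting a nearby competitor, whereas the paper first observes that $\bar x$ cannot be a local minimum of $f+g_n$ and then starts Ekeland from a near-minimizer via Lemma~\ref{lem:vp}; both yield the same estimates and conclusions.
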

\begin{proof}
    Fix $\bar x\in\dom |\nabla f|$ such that
    $
        r := |\nabla f|(\ol x) > 0$.
      To shorten the annotations, we can assume without loss of generality that
    $
        f(\bar x) = 0$.
    So, \eqref{eq:slope-pln} gives
    \begin{equation}
        \label{eq:f-splr}
        f(x) \ge - rd(x,\bar x) - c(r+1)d^2(x,\bar x), \quad\forall x\in X.
    \end{equation}

    Fix an arbitrary $n\in\N$ such that $n > 1/r$, thus
    \begin{equation}
        \label{eq:rn-def}
        r_n := r - 1/n > 0.
    \end{equation}
    Consider the function
    $$
        g_n(x) := r_n d(x,\bar x) + c(r+2)d^2(x,\bar x).
    $$
    Lemma~\ref{lem:esitm-lip} gives that
    \begin{equation}
        \label{eq:g-n-lip}
        \lip {g_n}(x) \le r_n + 2c(r+2)d(x,\bar x),\quad\forall x\in X.
    \end{equation}

    Note that the function $(f+g_n)$ is bounded below. Indeed, from \eqref{eq:f-splr} it follows that
    $$f(x) + g_n(x) \ge - \frac{d(x,\bar x)}{n} + cd^2(x,\bar x) \ge \min_{t\ge 0} \left(ct^2-\frac{t}{n}\right).$$

    It is not possible for the function $(f+g_n)$ to have a local minimum at $\bar x$, for otherwise Lemma~\ref{lem:nabla<lip} and \eqref{eq:g-n-lip} would give $r=|\nabla f|(\bar x) \le r_n<r$, contradiction. So, $(f+g_n)(\bar x) = 0 > \inf(f+g_n)$.

    Choose an $\varepsilon_n > 0$ such that
    \begin{equation}
        \label{eq:eps-n-1}
        \inf(f+g_n) + \varepsilon_n < 0,
    \end{equation}
    and
    \begin{equation}
        \label{eq:eps<1/n}
        \varepsilon_n < \frac{1}{n}.
    \end{equation}
    From Lemma~\ref{lem:vp} there is $x_n$ such that
    \begin{equation}
        \label{eq:eps-n-min}
        (f+g_n)(x_n) \le \inf (f+g_n) + \varepsilon_n,
    \end{equation}
    and
    \begin{equation}
        \label{eq:n-vp}
        |\nabla f| (x_n) \le \lip {g_n} (x_n) + \varepsilon_n.
    \end{equation}
    From \eqref{eq:eps-n-min}  and \eqref{eq:eps-n-1} it follows that $(f+g_n)(x_n) < 0$, so $f(x_n) < -g_n(x_n) \le -r_nd(x_n,\bar x)$. Thus,
    \begin{equation}
        \label{eq:x_n-slope}
        \frac{f(\bar x)-f(x_n)}{d(\bar x,x_n)} = - \frac{f(x_n)}{d(\bar x,x_n)} > r_n.
    \end{equation}
    Using again $(f+g_n)(x_n) < 0$ together with \eqref{eq:f-splr}, we estimate $0 > f(x_n) + g_n(x_n) \ge -d(x_n,\bar x)/n + cd^2(x_n,\bar x)$, so
    $$
        0 > d(x_n,\bar x)(cd(x_n,\bar x) - 1/n).
    $$
    Clearly, this implies
    \begin{equation}
        \label{eq:xn-close}
        d(x_n,\bar x) < \frac{1}{cn}.
    \end{equation}
    From \eqref{eq:n-vp} and \eqref{eq:g-n-lip} we have that $|\nabla f|(x_n) - |\nabla f| (\bar x) \le (r_n + 2c(r+2)d(x_n,\bar x) + \varepsilon_n) - r = (r_n-r+\varepsilon_n) +  2c(r+2)d(x_n,\bar x)$. From \eqref{eq:rn-def} and \eqref{eq:eps<1/n} it follows that $(r_n-r+\varepsilon_n) = \varepsilon_n-1/n < 0$. Thus,
    \begin{equation}
        \label{eq:3*}
        |\nabla f|(x_n) - |\nabla f|(\ol x) < 2c(r+2)d(x_n,\bar x).
    \end{equation}

    Having constructed the sequence $(x_n)_{n>1/r}$, we claim that it satisfies the conclusion of the theorem. Indeed, \eqref{eq:xn-close} means that $x_n\to\bar x$, as $n\to\infty$, and \eqref{eq:x_n-slope} plus $r_n\to r$, as $n\to\infty$, gives \eqref{eq:sr-slope}. Finally, \eqref{eq:3*} immediately implies~\eqref{eq:kapa-2-ord}.
\end{proof}

\begin{lemm}
    \label{lem:lsc-slope}
    If $f:X\to \Ri$ is lower semicontinuous and regularly sloped of order $2$ with coefficient $c>0$, then the function
    $$
        x \to |\nabla f|(x)
    $$
    is lower semicontinuous.
\end{lemm}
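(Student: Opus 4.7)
Fix $\bar x\in X$ and a sequence $x_n\to\bar x$; the goal is to show $\liminf_n |\nabla f|(x_n)\ge |\nabla f|(\bar x)$. After passing to a subsequence I may assume that $|\nabla f|(x_n)$ converges to $L := \liminf_n |\nabla f|(x_n)\in[0,+\infty]$. The case $L=+\infty$ is trivial, so assume $L<+\infty$; in particular $x_n\in\dom f$ eventually, so \eqref{eq:slope-pln} applies at $x=x_n$.

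The first step will be to upgrade the lower semicontinuity of $f$ to continuity of $f$ along $(x_n)$. Applying \eqref{eq:slope-pln} at $x=x_n$ with $y=\bar x$ and solving for $f(x_n)$ yields
\[
f(x_n) \le f(\bar x) + |\nabla f|(x_n)\, d(\bar x,x_n) + c(1+|\nabla f|(x_n))\, d^2(\bar x,x_n).
\]
Since $d(\bar x,x_n)\to 0$ and $|\nabla f|(x_n)$ is bounded, the right-hand side tends to $f(\bar x)$, so $\limsup_n f(x_n)\le f(\bar x)$. Combined with the lower semicontinuity of $f$ this gives $f(x_n)\to f(\bar x)$, and in particular $\bar x\in\dom f$.

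The second step will be to pass the slope inequality to the limit. Writing \eqref{eq:slope-pln} at $x=x_n$ with an arbitrary $y\in X$ and letting $n\to\infty$ — every term on the right converges by the first step — I obtain
\[
f(y) \ge f(\bar x) - L\, d(y,\bar x) - c(1+L)\, d^2(y,\bar x), \quad \forall y\in X.
\]
Rearranging, dividing by $d(y,\bar x)$ for $y\ne\bar x$, taking positive part and then passing to $\limsup_{y\to\bar x}$ produces $|\nabla f|(\bar x)\le L$, which is the inequality sought.

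The only nontrivial point is the first step: lower semicontinuity of $f$ gives $\liminf_n f(x_n)\ge f(\bar x)$, which is the wrong direction to allow a limit in the slope estimate. The regularly sloped assumption is precisely what supplies the matching upper bound $\limsup_n f(x_n)\le f(\bar x)$, hence the continuity of $f$ along $(x_n)$, and thereby makes the limit in the second step legitimate.
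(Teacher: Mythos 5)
Your argument is correct, and its core move is the same as the paper's: write \eqref{eq:slope-pln} at $x=x_n$ and pass to the limit in $n$, then divide by $d(y,\bar x)$ and let $y\to\bar x$. The difference lies in how the term $f(x_n)$ on the right-hand side is treated, and here your proof does more work than necessary. You first establish $f(x_n)\to f(\bar x)$ (your Step 1 is exactly the paper's Lemma~\ref{lem:f-cont} applied to the bounded-slope subsequence) and only then take the limit. The paper skips this entirely: since $f(x_n)$ sits on the side of the inequality $f(y)\ge f(x_n)-\cdots$ that is being bounded from below, the lower semicontinuity estimate $\liminf_n f(x_n)\ge f(\bar x)$ is precisely what is needed to preserve the inequality in the limit. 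So your closing remark that lower semicontinuity ``is the wrong direction to allow a limit in the slope estimate'' is a misdiagnosis --- it is exactly the right direction, and the continuity step, while valid, is superfluous. Two smaller points: the paper avoids extracting a subsequence along which $|\nabla f|(x_n)$ converges by replacing $|\nabla f|(x_{n_k})$ with an arbitrary $r>\liminf_n|\nabla f|(x_n)$, using that the right-hand side of \eqref{eq:slope-pln} is decreasing in the slope value; and your parenthetical ``in particular $\bar x\in\dom f$'' does not actually follow from $\limsup_n f(x_n)\le f(\bar x)$ (which is vacuous if $f(\bar x)=+\infty$) --- it follows a posteriori from the limiting inequality together with properness of $f$, a point the paper also leaves implicit.
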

\begin{proof}
    Fix $x\in X$ and let $x_n\to x$ as $n\to\infty$. We have to show that
    $$
        |\nabla f|(x) \le \liminf_{n\to\infty} |\nabla f|(x_n).
    $$
    If $|\nabla f|(x_n)\to\infty$ this holds, so let $r\in\R$ be such that
    $$
        r > \liminf_{n\to\infty} |\nabla f|(x_n).
    $$
    There is a subsequence $x_{n_k}\to x$ as $k\to\infty$, such that $|\nabla f|(x_{n_k}) < r$ for all $k\in\mathbb{N}$. From \eqref{eq:slope-pln} it follows that
    $$
        f(y) \ge f(x_{n_k}) - rd(y,x_{n_k}) - c(1+r)d^2(y,x_{n_k})
    $$
    for any fixed $y\in X$ and all $k\in\mathbb{N}$. Passing to limit as $k\to\infty$ and taking into account that $f$ is lower semicontinuous, we get
    $$
        f(y) \ge f(x) - rd(y,x) - c(1+r)d^2(y,x),
    $$
    so, by definition $r\ge |\nabla f|(x)$.
\end{proof}
\begin{lemm}
    \label{lem:f-cont}
    If $f:X\to \Ri$ is lower semicontinuous and regularly sloped of order $2$ with coefficient $c>0$, and the sequence $x_n\to x$ is such that the sequence $(|\nabla f|(x_n))_{n\in\N}$ is bounded, then
    \begin{equation}
        \label{eq:f-cont}
        \lim_{n\to\infty} f(x_n) = f(x).
    \end{equation}
\end{lemm}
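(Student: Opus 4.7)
The plan is a short two-sided estimate. The lower bound on $\liminf f(x_n)$ is immediate from lower semicontinuity, so the real content is an upper bound on $\limsup f(x_n)$, which comes directly from the regularly sloped inequality once we observe that boundedness of $|\nabla f|(x_n)$ puts each $x_n$ in $\dom|\nabla f|\subset \dom f$.

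More precisely, let $M := \sup_n |\nabla f|(x_n) < \infty$. Since each $x_n\in\dom|\nabla f|$, Definition~\ref{def:slope-pln} applies at $x_n$ with $y=x$, giving
\[
    f(x) \ge f(x_n) - |\nabla f|(x_n)\,d(x,x_n) - c(1+|\nabla f|(x_n))\,d^2(x,x_n),
\]
and therefore
\[
    f(x_n) \le f(x) + M\,d(x_n,x) + c(1+M)\,d^2(x_n,x).
\]
Since $d(x_n,x)\to 0$, this yields $\limsup_{n\to\infty} f(x_n) \le f(x)$.

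On the other hand, lower semicontinuity of $f$ directly gives $f(x)\le \liminf_{n\to\infty} f(x_n)$. Combining the two bounds yields \eqref{eq:f-cont}. There is no real obstacle here; the only thing to notice is that the regularly sloped inequality is exactly an upper bound for values of $f$ near $x_n$ in terms of $f(x_n)$ and the slope, and when the slopes stay bounded this furnishes the continuity statement complementary to lower semicontinuity.
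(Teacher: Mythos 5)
Your proof is correct and follows essentially the same route as the paper's: apply the regularly sloped inequality at each $x_n$ with $y=x$, use the uniform bound on the slopes to get $\limsup_{n\to\infty} f(x_n) \le f(x)$, and combine with lower semicontinuity. The only cosmetic difference is that the paper plugs a uniform bound $r$ directly into the inequality while you bound afterwards by $M$; the argument is identical.
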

\begin{proof}
    Let $r > 0$ be such that $|\nabla f|(x_n) < r$ for all $n\in\N$. From \eqref{eq:slope-pln} it follows that
    $$
        f(x) \ge f(x_n) - rd(x,x_n) - c(r+1)d^2(x,x_n),\quad\forall n\in\N.
    $$
    Since $d(x,x_n) \to 0$, as $n\to\infty$, we have
    $$
        \limsup_{n\to\infty} f(x_n) \le f(x),
    $$
    and \eqref{eq:f-cont} follows from the lower semicontinuity of $f$.
\end{proof}

The following estimate is crucial for our approach. 
\begin{lemm}
    \label{lem:series}
    Let $a_n \ge 0$ and $b_n > 0$ be such that
    \begin{equation}
        \label{eq:an-conv}
        \sum_{n=0}^\infty a_n < \infty,
    \end{equation}
    and for some $c>0$
    \begin{equation}
        \label{eq:an-bn-cond}
        b_{n+1} - b_n < 2c (2 + b_n) a_n, \quad \forall n \ge 0.
    \end{equation}
    Then the sequence $(b_n)_0^\infty$ is bounded.
\end{lemm}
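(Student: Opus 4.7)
The plan is to reduce the recurrence inequality to a telescoping multiplicative form and then exploit the convergence of an infinite product controlled by $\sum a_n$.

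First I would set $c_n := b_n + 2 > 0$ in order to absorb the constant in the factor $(2+b_n)$. The hypothesis \eqref{eq:an-bn-cond} then rewrites as
\[
    c_{n+1} - c_n = b_{n+1} - b_n < 2c\,c_n\, a_n,
\]
so that $c_{n+1} < c_n(1 + 2c\, a_n)$ for every $n\ge 0$.

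Iterating this one-step estimate gives
\[
    c_n < c_0 \prod_{k=0}^{n-1}(1 + 2c\, a_k),\quad \forall n\ge 1.
\]
The next step is to show that the sequence of partial products is bounded. Since $a_k\ge 0$, we have $\ln(1 + 2c\, a_k) \le 2c\, a_k$, whence
\[
    \sum_{k=0}^{\infty} \ln(1 + 2c\, a_k) \le 2c\sum_{k=0}^{\infty} a_k < \infty
\]
by \eqref{eq:an-conv}. Consequently the infinite product $\prod_{k=0}^{\infty}(1 + 2c\,a_k)$ converges to a finite positive value $M$, so $c_n < c_0 M$ for all $n$, and therefore $b_n = c_n - 2$ is bounded.

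There is no real obstacle here beyond spotting the substitution $c_n = b_n + 2$ that turns the additive recurrence into a multiplicative one; after that, the boundedness of the product under summability of the $a_n$ is standard.
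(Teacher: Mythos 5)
Your proof is correct, and it is actually cleaner than the one in the paper, though both rest on the same engine: convert the additive recurrence into a multiplicative one, take logarithms, and use $\ln(1+t)\le t$ together with $\sum a_n<\infty$. The difference is in how the factor $(2+b_n)$ is absorbed. The paper divides \eqref{eq:an-bn-cond} by $b_n$ directly, getting the ratio $1+2c(1+2/b_n)a_n$, which is only usefully bounded when $b_n\ge 1$; this forces the introduction of the exceptional index set $A=\{0\}\cup\{n:\ b_n\le 1\}$, a case analysis on the last index in $A$ before $n$, and a separate estimate for $b_{k+1}$ at the re-entry point. Your substitution $c_n:=b_n+2$ makes the right-hand side of \eqref{eq:an-bn-cond} exactly $2c\,c_n a_n$, so the one-step bound $c_{n+1}<c_n(1+2c\,a_n)$ holds for \emph{every} $n$ with no positivity threshold to worry about (indeed $c_n>2$ always), and the whole argument collapses to a single telescoping product bounded by $\exp\left(2c\sum_{k=0}^\infty a_k\right)$. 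The only point worth making explicit is that iterating the strict inequality is legitimate because each factor $1+2c\,a_k$ is positive; with that said, your bound $b_n<(b_0+2)\exp\left(2c\sum a_k\right)-2$ is both simpler to derive and sharper in form than the paper's $(b+2c(2+b)s)e^{6cs}$.
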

\begin{proof}
    Set
    $
        s := \sum_{n=0}^\infty a_n$.
    Let
    $
        A := \{0\}\cup\{n\in\N:\ b_n \le 1\}$.

    Fix an arbitrary $n\in\N\setminus A$. Let
    $
        k := \max\{i\in A:\ i < n \}$.
    This means that $b_k \le b := \max\{b_0,1\}$ and $b_i > 1$, for all $i$ such that $ k+1\le i\le  n$. So, we derive from \eqref{eq:an-bn-cond} that
    $$
        \frac{b_{i+1}}{b_i} < 1 + 2c \left(1 + \frac{2}{b_i}\right) a_i < 1 + 6c a_i,\quad\text{for all }i\text{ such that } k+1\le i\le  n-1.
    $$
    Since $\ln (1+t) \le t$ for $t\ge 0$, we have
    $$
        \ln b_{i+1} - \ln b_i < 6c a_i,\quad\text{for all }i\text{ such that } k+1\le i\le  n-1.
    $$
    Summing these, we get
    $$
        \ln b_n - \ln b_{k+1} < 6c \sum_{i=k+1}^{n-1} a_i < 6c s.
    $$
    Therefore,
    $$
        b_n < b_{k+1}e^{6cs}.
    $$
    But because $b_k \le b$, \eqref{eq:an-bn-cond} gives $b_{k+1} \le b + 2c(2+b)a_k < b + 2c(2+b)s$, so
    $$
        b_n < (b + 2c(2+b)s)e^{6cs},\quad\forall n\in N\text{ such that }\ b_n > 1.
    $$
\end{proof}

We will prove here a variant of Long Orbit Empty Value (LOEV) method, see \cite{iz-loev, det-prpr}. It concerns multivalued maps $S:X\rightrightarrows X$, where $(X,d)$ is a complete metric space. As usual, $
    \dom S := \{x\in X: \ S(x)\neq\varnothing\}$.

A $S$-\emph{orbit} is called any finite or infinite sequence $x_0,x_1,x_2\ldots$ such that $x_{i+1}\in S (x_i)$ for all $i$'s with a successor. We call $x_0$ \emph{the starting point} of the orbit. We differentiate between \emph{finite} and \emph{infinite} orbits. It stands to reason that for a finite orbit  $x_0,\ldots,x_n$, with $x_{i+1}\in S(x_i)$, for all $i$ such that $0\le i\le n-1$, the last point $x_n$ would be called \emph{the end point} of the orbit. Obviously, each finite orbit $(x_i)_0^n$ is of \emph{finite length}
$$
    \left|(x_i)_0^n\right| := \sum_{i=0}^{n-1} d(x_{i+1},x_i),
$$
if $n\ge 1$. The length of an orbit consisting of one point is, of course, zero.

Considering infinite orbits, $(x_i)_{0}^\infty$ with $x_{n+1}\in S(x_n)$ for all $n\ge0$ is called orbit of \emph{finite length} if
$$
    \left|(x_i)_0^\infty\right| :=  \sum_{n=0}^\infty d(x_{n+1},x_n) < \infty,
$$
and, respectively, of \emph{infinite length} if the above series is divergent.

Since $X$ is complete, each infinite orbit of finite length is convergent as a sequence, that is, there is $x_\infty\in X$ such that
$$
    \lim_{n\to\infty} d(x_n,x_\infty) = 0.
$$
We call $x_\infty$ the \emph{end point} of the orbit $(x_i)_{0}^\infty$.
\begin{dfn}
    \label{def:star}
    Let $(X,d)$ be a complete metric space and let $S:X\rightrightarrows X$ be a multivalued map. We say that $S$ satisfies property $(\ast)$ if
    \begin{equation}
        \label{eq:x-not-s}
        x\not\in S(x), \quad \forall x \in X,
    \end{equation}
    and for any infinite $S$-orbit $(x_n)_{0}^\infty$ of finite length with end point $x_\infty$, and any $y\in S(x_\infty)$, there exists $N\in\N$ such that
    $$
        y \in S(x_n), \quad \forall n > N.
    $$
\end{dfn}

\begin{thm}
    \label{thm:loev}
    Let $(X,d)$ be a complete metric space. Let $S:X\rightrightarrows X$ satisfy  property $(\ast)$ and let $x_0\in X$ be fixed.

    Then at least one of the following two holds:
    \begin{enumerate}
        \item[$(a)$]
            There is a $S$-orbit of finite length starting at $x_0$ and ending outside of $\dom S$.
        \item[$(b)$]
            There is a $S$-orbit of infinite length starting at $x_0$.
    \end{enumerate}
\end{thm}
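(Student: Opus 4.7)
The plan is to argue by contradiction: assume neither $(a)$ nor $(b)$ holds and construct greedily an $S$-orbit starting from $x_0$ whose behaviour violates property $(\ast)$. Having produced $x_0,\ldots,x_n$ with $x_n\in\dom S$, set
$r_n := \sup\{d(y,x_n):\ y\in S(x_n)\}$
(possibly $+\infty$), and pick $x_{n+1}\in S(x_n)$ with $d(x_{n+1},x_n)\ge \tfrac{1}{2}\min\{1,r_n\}$; note $r_n>0$ because $x_n\notin S(x_n)$ by $(\ast)$. If the recursion ever stops because some $x_n\notin\dom S$, the finite sequence $x_0,\ldots,x_n$ is an $S$-orbit of finite length ending outside $\dom S$, putting us in case $(a)$ and contradicting the assumption. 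Hence the recursion produces an infinite $S$-orbit $(x_n)_0^\infty$.

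If this orbit has infinite length we are in case $(b)$, contradicting the assumption, so it has finite length. By completeness of $X$, it converges to some end point $x_\infty\in X$, and the consecutive distances $d(x_{n+1},x_n)$ tend to $0$. If $x_\infty\notin\dom S$ then the infinite orbit itself realizes case $(a)$, contradiction; so $x_\infty\in\dom S$, and we may fix any $y\in S(x_\infty)$. Since $x_\infty\notin S(x_\infty)$, we have $y\neq x_\infty$, so $c:=d(y,x_\infty)>0$.

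Now invoke the second clause of $(\ast)$: there exists $N$ such that $y\in S(x_n)$ for every $n>N$. For all sufficiently large $n$ we have additionally $d(x_n,x_\infty)<c/2$, so the triangle inequality gives $d(y,x_n)>c/2$. This forces $r_n>c/2$, and the greedy rule then yields $d(x_{n+1},x_n)\ge \tfrac{1}{2}\min\{1,c/2\}>0$ for every large $n$, contradicting $d(x_{n+1},x_n)\to 0$. This contradiction proves that at least one of $(a)$ or $(b)$ must hold.

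The delicate point is calibrating the greedy rule: capping $r_n$ by $1$ is what lets us keep making a definite jump even when $r_n=+\infty$, while the factor $1/2$ is what makes the attained jump comparable to the lower bound on $r_n$ extracted from $(\ast)$. Both the ``jumps tend to zero'' consequence of finite length and the persistence of a witness $y$ inside $S(x_n)$ furnished by $(\ast)$ are essential; without either of them, the final quantitative contradiction could not be produced.
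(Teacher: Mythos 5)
Your proposal is correct and follows essentially the same route as the paper: the same greedy selection $d(x_{n+1},x_n)\gtrsim\min\{1,\sup_{y\in S(x_n)}d(y,x_n)\}$, and the same final contradiction in which property $(\ast)$ keeps a witness $y$ in $S(x_n)$ for large $n$, forcing the consecutive jumps to stay bounded away from zero against the finiteness of the length. The only cosmetic difference is that you frame it as a contradiction to the negation of $(a)\vee(b)$, while the paper runs the construction directly and reads off which alternative occurs.
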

\begin{proof}
    We construct the desired $S$-orbit by induction, starting at $x_0$.

    If $x_n$ is already chosen, then:

    If $S(x_n) = \varnothing$ then $(a)$ is fulfilled and we are done.

    If, on the other hand, $S(x_n) \neq \varnothing$, let
    \begin{equation}
        \label{eq:loev-nu}
        \nu_n := \sup\{d(y,x_n):\ y \in S(x_n)\} > 0.
    \end{equation}
    Take $x_{n+1} \in S(x_n)$ such that
    \begin{equation}
        \label{eq:loev-next}
        d(x_{n+1},x_n) > \min\{1, \nu_n/2\}.
    \end{equation}

    Assume that we obtain in this way an infinite orbit $(x_n)_{0}^\infty$ with $x_{n+1}\in S(x_n)$, $\forall n\ge0$.

    If $(x_n)_{0}^\infty$ is of infinite length then $(b)$ is fulfilled and we are done.

    If $(x_n)_{0}^\infty$ is of finite length, then let $x_\infty$ be the end point of $(x_n)_{0}^\infty$, i.e.
    $$
        \lim_{n\to\infty} x_n = x_\infty.
    $$
    Assume that there is some $y\in S(x_\infty)$. We know from $(\ast)$ property, see \eqref{eq:x-not-s}, that $y\neq x_\infty$, and $y\in S(x_n)$ for all but finitely many $n$'s. Then \eqref{eq:loev-nu} gives $\nu_n \ge d(y,x_n) \to d(y,x_\infty) > 0$ and \eqref{eq:loev-next} implies $\liminf_{n\to\infty} d(x_{n+1},x_n) > 0$, so $\left|(x_n)_{0}^\infty\right| = \infty$, contradiction.  Therefore, $S(x_\infty) = \varnothing$, and $(a)$ holds.
\end{proof}

\begin{lemm}[see \cite{iz-slopes}]
    \label{lem:gr-dens}
    Let $(X,d)$ be complete metric space. Let $f,g: X\to\Ri$ be lower semicontinuous. Let $f$ also be proper and bounded below. Then
    \begin{equation}
        \label{eq:gr-dens}
        \inf_{\dom f} (f-g) = \inf_{\dom |\nabla f|} (f-g).
    \end{equation}
\end{lemm}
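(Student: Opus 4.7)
The plan is to prove the two inequalities separately. The direction $\inf_{\dom f}(f-g) \le \inf_{\dom |\nabla f|}(f-g)$ is immediate from the inclusion $\dom |\nabla f| \subseteq \dom f$. For the reverse direction, I would fix any $x_0 \in \dom f$ and produce, for each $\lambda$ large enough, a point $x_\lambda \in \dom |\nabla f|$ whose value $(f-g)(x_\lambda)$ is essentially no larger than $(f-g)(x_0)$; passing to the infimum over $x_0 \in \dom f$ then yields the desired inequality.

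The construction relies on Ekeland's Variational Principle (Theorem~\ref{thm:eke}) applied to $f$ alone---since $f - g$ is not assumed lower semicontinuous, Ekeland cannot be applied to the difference directly. For $\lambda > 0$, Ekeland produces $x_\lambda$ satisfying $\lambda d(x_\lambda, x_0) \le f(x_0) - f(x_\lambda)$ and being a strict global minimum of $x \mapsto f(x) + \lambda d(x, x_\lambda)$. Because $\lambda d(\cdot, x_\lambda)$ is $\lambda$-Lipschitz, Lemma~\ref{lem:nabla<lip} forces $|\nabla f|(x_\lambda) \le \lambda < \infty$, so $x_\lambda \in \dom |\nabla f|$. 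From the Ekeland decrease inequality one also extracts $f(x_\lambda) \le f(x_0)$ together with $d(x_\lambda, x_0) \le (f(x_0) - \inf f)/\lambda$, which tends to $0$ as $\lambda \to \infty$.

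The last step transports the estimate from $f$ to $f-g$ via the lower semicontinuity of $g$ at $x_0$. If $g(x_0) < +\infty$, then for any $\varepsilon > 0$ and all sufficiently large $\lambda$ one has $g(x_\lambda) > g(x_0) - \varepsilon$, whence $(f-g)(x_\lambda) < (f-g)(x_0) + \varepsilon$; letting $\varepsilon \to 0$ gives $\inf_{\dom |\nabla f|}(f-g) \le (f-g)(x_0)$. If instead $g(x_0) = +\infty$, so $(f-g)(x_0) = -\infty$, lower semicontinuity forces $g(x_\lambda) \to +\infty$, so $(f-g)(x_\lambda) \to -\infty$, and the same conclusion holds. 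Taking infimum over $x_0 \in \dom f$ finishes the proof. There is no serious obstacle in the argument; the only subtlety is that $f - g$ need not be lsc, which is precisely why Ekeland is run on $f$ and the behaviour of $g$ is imported separately through its lower semicontinuity at the target point $x_0$.
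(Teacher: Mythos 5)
Your argument is correct and is essentially the standard proof of this lemma (the paper itself defers to \cite[Lemma~3.8]{iz-slopes} rather than reproving it): Ekeland applied to $f$ alone at a point $x_0\in\dom f$ gives points $x_\lambda\to x_0$ with $f(x_\lambda)\le f(x_0)$ and $|\nabla f|(x_\lambda)\le\lambda$, and the lower semicontinuity of $g$ at $x_0$ transfers the estimate to $f-g$. Note that your Ekeland points in fact satisfy $f(x)+\lambda d(x,x_\lambda)\ge f(x_\lambda)$ for \emph{all} $x$, so you have actually bounded the global slope and thereby recovered the stronger form of the statement proved in the cited reference.
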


In  \cite[Lemma~3.8.]{iz-slopes} the equality  \eqref{eq:gr-dens} is proved for the global slope, which dominates the local slope, so the formula there is stronger, but we do not need it in full.

\begin{dfn}
    \label{dfn:sharp-min}
    Let $(X,d)$ be complete metric space. Let $f: X\to\Ri$ be proper, lower semicontinuous and bounded below.

    We say that $\bar x$ is the \emph{sharp minimum} of $f$, if it is the minimum:
    $$
        f(\bar x) = \min f,
    $$
    and, moreover,
    \begin{equation}
        \label{eq:sharp}
        0 < \inf_{x\neq\bar x} |\nabla f|(x).
    \end{equation}
\end{dfn}
If $f$ has a sharp minimum, then $f$ cannot have any other local minima, because $|\nabla f|(x) = 0$ at each local minimum $x$. It is easy to check that the sharp minimum is \emph{strong}, that is, each minimising sequence converges to it, but a strong minimum is not necessarily sharp.
\begin{prop}
    \label{pro:slop-main}
    Let $(X,d)$ be a complete metric space and let $f,g:X\to\Ri$ be proper and lower semicontinuous functions. Let $f$ be regularly sloped of order $2$ with coefficient $c>0$ and have sharp minimum at $\bar x$. If
    \begin{equation}
        \label{eq:nabla-fg<}
        |\nabla g|(x) < |\nabla f|(x),\quad \forall x\in \dom |\nabla f|\setminus \{\bar x\},
    \end{equation}
    and
    \begin{equation}
        \label{eq:fg-meshano}
        g(y) \ge g(x) - |\nabla f|(x) d(y,x) - c( |\nabla f|(x) + 1)d^2(y,x),
    \end{equation}
    for all $x\in\dom|\nabla f|$ and all $y\in X$,
    then
    \begin{equation}
        \label{eq:nabla-fg<concl}
        \min_{\dom f} (f-g) = f(\bar x) - g(\bar x).
    \end{equation}
\end{prop}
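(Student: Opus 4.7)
The plan is to deploy the LOEV machinery (Theorem~\ref{thm:loev}) to build a descent orbit for $f-g$ ending at $\bar x$. By Lemma~\ref{lem:gr-dens} it is enough to show $(f-g)(y_0) \ge f(\bar x) - g(\bar x)$ for every $y_0 \in \dom |\nabla f|$; fix such a $y_0$.

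First I would define $S:X\rightrightarrows X$ by $S(x) = \varnothing$ unless $x \in \dom|\nabla f|\setminus\{\bar x\}$, and for such $x$ declare $y \in S(x)$ iff $y\neq x$, $(f-g)(y) < (f-g)(x)$, $f(y) < f(x)$, and $|\nabla f|(y) < |\nabla f|(x) + 2c(|\nabla f|(x)+2)\, d(x,y)$. Nonemptiness of $S(x)$ at $x \in \dom|\nabla f|\setminus\{\bar x\}$ follows from sharpness, which gives $|\nabla f|(x)>0$, combined with Theorem~\ref{thm:reg-slope-repr}, which furnishes candidates $y$ arbitrarily close to $x$ on which $f$ decreases at rate close to $|\nabla f|(x)$ and which satisfy the slope clause; the $\limsup$-definition of $|\nabla g|$, coupled with $|\nabla g|(x)<|\nabla f|(x)$, forces $g$ to decrease strictly slower than $f$ near $x$, so $(f-g)$ does decrease.

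Property $(\ast)$ of Definition~\ref{def:star} would be verified as follows. For an infinite $S$-orbit $(x_n)$ of finite length, the slope clause in $S$ matches exactly the hypothesis of Lemma~\ref{lem:series} with $a_n = d(x_n, x_{n+1})$ and $b_n = |\nabla f|(x_n)$, so $(|\nabla f|(x_n))$ stays bounded. Lemma~\ref{lem:f-cont} then continues $f$ along $(x_n)$, and applying~\eqref{eq:fg-meshano} at each $x_n$ yields $\limsup_n g(x_n) \le g(x_\infty)$, so by lower semicontinuity of $g$ the function $g$ is continuous along the orbit too. Together with lower semicontinuity of $|\nabla f|$ (Lemma~\ref{lem:lsc-slope}), the three strict conditions defining $y\in S(x_\infty)$ transfer to $S(x_n)$ for all large $n$.

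Theorem~\ref{thm:loev} then gives either (a) a finite-length orbit whose endpoint $x_*$ satisfies $S(x_*)=\varnothing$, or (b) an orbit of infinite length. In case~(a) the series/continuity analysis keeps $x_* \in \dom|\nabla f|$, so $S(x_*)=\varnothing$ only if $x_* = \bar x$, and continuity of $f$ and $g$ along the orbit delivers $(f-g)(\bar x) = \lim_n (f-g)(x_n) \le (f-g)(y_0)$, as required. The hard part is excluding case~(b). To attack it I would strengthen $S$ with a quantitative descent clause of the form $(f-g)(x)-(f-g)(y)\ge \tfrac{1}{2}(|\nabla f|(x)-|\nabla g|(x))\, d(x,y)$, still attainable by the same construction, and play the resulting potentially divergent sum $\sum \tfrac{1}{2}(|\nabla f|(x_n)-|\nabla g|(x_n))d(x_n,x_{n+1})$ off against the fact that $(f-g)(y_0)$ is finite. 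The subtle bookkeeping needed when the slope gap $|\nabla f|(x_n)-|\nabla g|(x_n)$ tends to zero, possibly combined with a reduction to the sublevel set via Lemma~\ref{lem:nabla-restrict}, is where I expect the real technical work of the proof to lie.
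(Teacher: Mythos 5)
Your overall architecture matches the paper's: reduce to $\dom|\nabla f|$ via Lemma~\ref{lem:gr-dens}, build a descent multimap, feed it to Theorem~\ref{thm:loev}, control $|\nabla f|$ along orbits with Lemma~\ref{lem:series}, and transfer continuity of $f$ and $g$ to the endpoint via Lemma~\ref{lem:f-cont} and~\eqref{eq:fg-meshano}. The nonemptiness argument via Theorem~\ref{thm:reg-slope-repr} and the verification of property $(\ast)$ by lower semicontinuity are also essentially the paper's. But the step you yourself flag as ``the hard part'' --- excluding orbits of infinite length --- is a genuine gap, and the route you sketch for it does not close. Your proposed quantitative clause $(f-g)(x)-(f-g)(y)\ge \tfrac12\bigl(|\nabla f|(x)-|\nabla g|(x)\bigr)d(x,y)$ only bounds $\sum\bigl(|\nabla f|(x_n)-|\nabla g|(x_n)\bigr)d(x_n,x_{n+1})$, and hypothesis~\eqref{eq:nabla-fg<} gives no uniform lower bound on the gap $|\nabla f|(x_n)-|\nabla g|(x_n)$; when it tends to zero this sum can converge while $\sum d(x_n,x_{n+1})$ diverges, so finiteness of $(f-g)(y_0)$ does not rule out case (b). Moreover $f-g$ need not be bounded below, so even convergence of that sum is not guaranteed.

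The missing idea is that the \emph{sharp minimum} hypothesis already supplies the needed uniform rate, applied to $f$ alone rather than to the gap. By~\eqref{eq:sharp} there is $\varepsilon>0$ with $|\nabla f|(x)>\varepsilon$ for every $x\in\dom|\nabla f|\setminus\{\bar x\}$, so Theorem~\ref{thm:reg-slope-repr} furnishes candidates $y$ with the quantitative descent $f(y)<f(x)-\varepsilon\,d(y,x)$; this is the paper's clause $S_2$. Iterating it along any $S$-orbit starting at $x_0$ gives $\sum_i d(x_{i+1},x_i)\le\varepsilon^{-1}\bigl(f(x_0)-\min f\bigr)<\infty$, since $f$ attains its minimum at $\bar x$. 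Hence \emph{every} orbit from $x_0$ has finite length, case (b) of Theorem~\ref{thm:loev} is impossible outright, and no bookkeeping with the slope gap is needed. The strict inequality $|\nabla g|<|\nabla f|$ is used only where you also use it: to make the $(f-g)$-descent clause nonempty at each $x\ne\bar x$. With $S_2$ in place your treatment of case (a) and of finite-length infinite orbits goes through as in the paper.
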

\begin{proof}
    If $\bar x\not\in\dom g$ then the right hand side of \eqref{eq:nabla-fg<concl} is $-\infty$, this is the minimum, and \eqref{eq:nabla-fg<concl} is trivial.

    So, let $g(\bar x) < \infty$ and let us assume for simplicity -- and without loss of generality -- that
    $$
        f(\bar x) = g(\bar x) = 0.
    $$
    To verify \eqref{eq:nabla-fg<concl} we need to prove that $f\ge g$. But from Lemma~\ref{lem:gr-dens} we know that it is enough to show that
    $$
        f(x) \ge g(x),\quad \forall x\in \dom |\nabla f|.
    $$
    Therefore, fix an arbitrary $x_0\in X$ such that $f(x_0) < \infty$. In what follows we will show that $f(x_0) \ge g(x_0)$.

       Since $\bar x$ is a sharp minimum, from \eqref{eq:sharp} there is $\varepsilon > 0$ such that
    \begin{equation}
        \label{eq:eps-gap}
        |\nabla f|(x) > \varepsilon,\quad\forall x\in \dom|\nabla f|(x) \setminus \{\bar x\}.
    \end{equation}
    For any $x\in\dom|\nabla f|$ such that $x\neq \bar x$, define the following subsets of $\dom|\nabla f|$:
    \begin{equation}
        \label{eq:def-s1-diff}
        S_1(x) := \{y\in \dom|\nabla f|:\ f(y) - g(y) < f(x) - g(x)\}
    \end{equation}
  (note  that because of \eqref{eq:nabla-fg<}, $x\in\dom|\nabla g| \subset \dom g$, so $f(x) - g(x)$ is well defined);
    \begin{equation}
        \label{eq:def-s2-decr}
        S_2(x) := \{y\in \dom|\nabla f|:\ f(y) < f(x) - \varepsilon d(y,x)\};
    \end{equation}
    \begin{equation}
        \label{eq:def-s3-nabla}
        S_3(x) := \{y:\ |\nabla f|(y) < |\nabla f|(x) + 2c(2+|\nabla f|(x))d(y,x)\},
    \end{equation}
    and
    $$
        S(x) := S_1(x) \cap S_2(x) \cap S_3 (x).
    $$
    In this way we define a multivalued map $S:X\rightrightarrows X$. We will study its properties.

    First, the way it is defined ensures that $\dom S \subset \dom|\nabla f| \setminus\{\bar x\}$. We claim that in fact
    \begin{equation}
        \label{eq:dom-S}
        \dom S = \dom|\nabla f| \setminus\{\bar x\}.
    \end{equation}
    Indeed, fix $x\in \dom|\nabla f|$ such that $x\neq \bar x$, so
    $$
        r := |\nabla f|(x) \in (\varepsilon,\infty),
    $$
    see \eqref{eq:eps-gap}. From \eqref{eq:nabla-fg<} we have $|\nabla g|(x) < r$, so there is $\delta > 0$ such that $r-\delta > 0$ and
    $$
        g(y) \ge g(x) - (r-\delta) d(y,x),\quad\forall y\in B(x;\delta).
    $$
    From Theorem~\ref{thm:reg-slope-repr} there is a sequence $x_n\to x$ satisfying \eqref{eq:sr-slope} and \eqref{eq:kapa-2-ord}. Now, \eqref{eq:kapa-2-ord} simply means that $x_n\in S_3(x)$ for all large $n$'s. From \eqref{eq:sr-slope} we get on the one hand that, $f(x) - f(x_n) > \varepsilon d(x,x_n)$, because $|\nabla f|(x) > \varepsilon$. Therefore, $x_n\in S_2(x)$ for all large $n$'s. On the other hand, $x_n\in B(x;\delta)$ for all large $n$'s, so using the above inequality for $g$ and again \eqref{eq:sr-slope} we get
    $$
        \frac{f(x)-f(x_n)}{d(x,x_n)} > r-\delta \ge \frac{g(x)-g(x_n)}{d(x,x_n)} \Rightarrow f(x_n) - g(x_n) < f(x) - g(x),
    $$
    for all $n$ large enough. This means that $x_n\in S_1(x)$ for all large $n$'s. So, $x_n\in S(x)$ for all but finitely many $n$'s and, therefore, $S(x)\neq\varnothing$ and \eqref{eq:dom-S} is verified.

    Next, each $S$-orbit starting at $x_0$ is of finite length. Indeed, if $(x_n)_{0}^\infty$ is a $S$-orbit, it is also a $S_2$-orbit, so iterating on \eqref{eq:def-s2-decr} we get
    $$
        \sum_{i=0}^n d(x_{i+1},x_i) < \varepsilon^{-1} \sum_{i=0}^n (f(x_i) - f(x_{i+1})) = \varepsilon^{-1} (f(x_0) - f(x_{n+1})),
    $$
    and, since $f\ge 0$, we get
    \begin{equation}
        \label{eq:finite-length}
        |(x_n)_{0}^\infty| \le f(x_0)/\varepsilon.
    \end{equation}
    The same estimate is valid, of course, for the lengths of the finite $S$-orbits starting at $x_0$ (but we do not need it).

    Let $(x_n)_{0}^\infty$ be an arbitrary infinite $S$-orbit starting at $x_0$. We  already know that it is of finite length, so it has an end point, say $x_\infty$. We claim that
    \begin{equation}
        \label{eq:cont-on-orbit}
        \lim_{n\to\infty} (f(x_n),g(x_n)) = (f(x_\infty),g(x_\infty)).
    \end{equation}
    Indeed, set $a_n := d(x_{n+1},x_n)$ and $b_n := |\nabla f|(x_n)$. We know that $a_n$'s satisfy \eqref{eq:an-conv} and because $(x_n)_{0}^\infty$ is a $S_3$-orbit, \eqref{eq:def-s3-nabla} translates to \eqref{eq:an-bn-cond}. From Lemma~\ref{lem:series} we get that the sequence $(|\nabla f|(x_n))_{0}^\infty$ is bounded and then Lemma~\ref{lem:f-cont} ensures that $f(x_n)\to f(x_\infty)$. We repeat the proof of  Lemma~\ref{lem:f-cont} for $g$ using \eqref{eq:fg-meshano}. Thus \eqref{eq:cont-on-orbit} is verified.

    Now we can show that $S$ has  property $(\ast)$. Because of the strict inequality, $S_1$ satisfies \eqref{eq:x-not-s},  thus so does $S$.

    Let $(x_n)_{0}^\infty$ be an infinite $S$-orbit of finite length ending at $x_\infty$ and let $y\in S(x_\infty)$. Because $y\in S_1(x_\infty)$, we have by \eqref{eq:def-s1-diff} that $f(y) - g(y) < f(x_\infty) - g(x_\infty)$, then \eqref{eq:cont-on-orbit} gives that $f(y) - g(y) < f(x_n) - g(x_n)$ for all large $n$'s. That is, eventually $y\in S_1(x_n)$.

    In a similar fashion, since $y\in S_2(x_\infty)$, we have $f(y) < f(x_\infty) - \varepsilon d(y,x_\infty)$, see \eqref{eq:def-s2-decr}. Since the function
    $x \to f(x) - f(y) - \varepsilon d(y,x)$ is lower semicontinuous and with positive value at $x_\infty$, we have $f(x_n) - f(y) - \varepsilon d(y,x_n) > 0$ for all but finitely many $n$'s, ergo $y\in S_2(x_n)$ eventually.

    Since $y\in S_3(x_\infty)$, we have by \eqref{eq:def-s3-nabla} that
    $$
        |\nabla f|(y) < |\nabla f|(x_\infty) + 2c(2 + |\nabla f|(x_\infty)) d(y,x_\infty).
    $$
    From Lemma~\ref{lem:lsc-slope} we have that the function $x\to |\nabla f|(x)$ is lower semicontinuous, thus the function $x\to |\nabla f|(x) + (2c + |\nabla f|(x)) d(y,x)$ is lower semicontinuous, so if we replace $x_\infty$ by $x_n$ in the above inequality, it will still be valid for large $n$'s, meaning that $y\in S_3(x)$ eventually.

    Property $(\ast)$ of $S$ being established, we can now apply Theorem~\ref{thm:loev}. From \eqref{eq:finite-length} it follows that for $S$ only (a) is possible.

    If a finite $S$-orbit starting at $x_0$, say $x_0,x_1,\ldots,x_n$, ends outside of $\dom S$, then because $S$ maps $\dom |\nabla f| \setminus \{\bar x\}$ into $\dom |\nabla f|$, necessarily $x_n\in \dom |\nabla f|$. Since $S(x_n) = \varnothing$, from \eqref{eq:dom-S} it follows that $x_n=\bar x$. Iterating on \eqref{eq:def-s1-diff}, we get $f(x_0) - g(x_0) > 0$.

    Finally, let $(x_n)_{0}^\infty$ be an infinite $S$-orbit of finite length starting at $x_0$ and ending at $x_\infty$ with $S(x_\infty) = \varnothing$. In the same way as in the proof of \eqref{eq:cont-on-orbit} we see that $(|\nabla f|(x_n))_{0}^\infty$ is a bounded sequence, and then Lemma~\ref{lem:lsc-slope} gives that $x_\infty\in \dom|\nabla f|$. Since $S(x_\infty) = \emptyset$, we have $x_\infty = \bar x$. Since $(x_n)_{0}^\infty$ is a $S_1$-orbit, we have $f(x_n) - g(x_n) < f(x_0) - g(x_0)$ for all $n\ge0$ and \eqref{eq:cont-on-orbit} yields $f(x_0) \ge g(x_0)$. (We can actually obtain strict inequality, but this is not important.)
\end{proof}
\begin{thm}
    \label{thm:slope-main}
    Let $(X,d)$ be a complete metric space and let $f,g:X\to\Ri$ be proper and lower semicontinuous functions. Let $f$ be regularly sloped of order $2$ with coefficient $c>0$ and have sharp minimum at $\bar x$. If
    \begin{equation}
        \label{eg:nab-fg<=}
        |\nabla g|(x) \le |\nabla f|(x),\quad\forall x\in X,
    \end{equation}
    and \eqref{eq:fg-meshano} is satisfied, then
    \begin{equation}
        \label{eg:slope-main-concl}
        f(x) \ge (f(\bar x) - g(\bar x)) + g(x),\quad\forall x\in X.
    \end{equation}
\end{thm}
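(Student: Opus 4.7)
The strategy is to reduce Theorem~\ref{thm:slope-main} to Proposition~\ref{pro:slop-main} via a scaling trick. If $\bar x\notin\dom g$, the right-hand side of \eqref{eg:slope-main-concl} is $-\infty$ and the conclusion is vacuous, so I would assume $g(\bar x)\in\R$.

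For every $\lambda\in(0,1)$, I would apply Proposition~\ref{pro:slop-main} to the pair $(f,\lambda g)$. Since $\bar x$ is a sharp minimum of $f$, Definition~\ref{dfn:sharp-min} supplies some $\varepsilon_0>0$ such that $|\nabla f|(x)\ge\varepsilon_0$ for every $x\in\dom|\nabla f|\setminus\{\bar x\}$. Combined with \eqref{eg:nab-fg<=} this gives
\[
|\nabla(\lambda g)|(x)=\lambda|\nabla g|(x)\le\lambda|\nabla f|(x)<|\nabla f|(x),\quad \forall x\in\dom|\nabla f|\setminus\{\bar x\},
\]
which is exactly the strict slope hypothesis \eqref{eq:nabla-fg<} required by the Proposition. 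The mixed inequality \eqref{eq:fg-meshano} for $\lambda g$ is obtained from the corresponding inequality for $g$ by multiplication: the subtracted terms on its right-hand side are nonnegative, so scaling by $\lambda\in(0,1)$ only shrinks them, and the resulting inequality is \emph{stronger} than the one needed for $\lambda g$. All remaining hypotheses of Proposition~\ref{pro:slop-main} (properness, lower semicontinuity, regular sloping of $f$, sharp minimum at $\bar x$) are inherited trivially.

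Thus Proposition~\ref{pro:slop-main} yields
\[
f(x)-\lambda g(x)\ge f(\bar x)-\lambda g(\bar x),\quad \forall x\in\dom f,
\]
with the inequality being trivial outside $\dom f$. Letting $\lambda\to 1^-$ produces \eqref{eg:slope-main-concl}: directly on $\dom f\cap\dom g$, and in the remaining cases either $f(x)=+\infty$ (trivial) or $g(x)=+\infty$ (in which case passing to the limit forces $f(x)=+\infty$ as well, so the stated inequality still holds). I do not anticipate a substantive obstacle here; the only conceptual point is that sharpness of the minimum at $\bar x$ is precisely what allows the non-strict assumption \eqref{eg:nab-fg<=} to be promoted, after rescaling $g$ by a factor $\lambda<1$, to the strict slope inequality required by Proposition~\ref{pro:slop-main}.
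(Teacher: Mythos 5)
Your proof is correct and follows essentially the same route as the paper: a scaling perturbation to upgrade the non-strict slope inequality to the strict one required by Proposition~\ref{pro:slop-main}, followed by a passage to the limit. The only cosmetic difference is that you shrink $g$ by a factor $\lambda<1$ while the paper dilates $f$ by $1+\delta$; your variant has the mild advantage that the hypotheses on $f$ (regular slopedness and condition \eqref{eq:fg-meshano}) need not be re-verified for a rescaled function.
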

\begin{proof}
    Since $\bar x$ is the minimum of $f$, we have $|\nabla f|(\bar x) = 0$. From \eqref{eg:nab-fg<=} it follows that $|\nabla g|(\bar x) = 0$ and in particular $g(\bar x) < \infty$. This means that the right hand side of \eqref{eg:slope-main-concl} is well defined, and also we can assume without loss of generality that
    $$
        f(\bar x) = g(\bar x) = 0.
    $$
    We have to prove that
    \begin{equation}
        \label{eq:4:f-ge-g}
        f(x) \ge g(x),\quad \forall x\in X.
    \end{equation}
    Since $f \ge 0$,
    $$
        f = \inf_{\delta > 0} (1 + \delta) f.
    $$
  For $\delta >0$ set $f_\delta := (1+\delta)f$. Since \eqref{eq:4:f-ge-g} is trivial if $f(x) = \infty$, it is sufficient to prove that
    $$
        f_\delta (x) \ge g(x),\quad\forall x\in\dom f,\ \forall \delta > 0.
    $$
    Fix an arbitrary $\delta > 0$. By definition,
    $$
        |\nabla f_\delta| (x) = (1+\delta) |\nabla f|(x),\quad\forall x\in X.
    $$
    From this and \eqref{eg:nab-fg<=} it follows that
    $$
    |\nabla f_\delta| (x) > |\nabla g|(x),\quad\forall x\in \dom |\nabla f_\delta|\setminus \{\bar x\},
    $$
    that is -- because $\dom|\nabla f_\delta| = \dom |\nabla f|$ -- the functions $f_\delta$ and $g$ satisfy \eqref{eq:nabla-fg<}. Clearly, $\bar x$ is sharp minimum to $f_\delta$, so Proposition~\ref{pro:slop-main} gives
    $$
        f_\delta(x) - g(x) \ge 0,\quad\forall x\in \dom f,
    $$
    see~\eqref{eq:nabla-fg<concl}.
\end{proof}

\section{Clarke-Rockafellar subdifferential}
    \label{sec:clarke}

To the end of the article we work in a Banach space denoted by $(E,\|\cdot\|)$ with closed unit ball $B_E$ and dual $E^*$.
	
    Provided $f:E\to \Ri$ is lower semicontinuous at $x\in\dom f$, the Clarke-Rockafellar directional derivative,
	e.g. \cite[p.97]{Clarke}, at the point $x$ at direction $h\in E$ is:
	\begin{equation}
		\label{eq:^o-def}
		f^\circ (x;h) := \sup_{\varepsilon > 0} f^\circ_\varepsilon (x;h),\text{ where }
	\end{equation}
	$$
	f^\circ_\varepsilon (x;h) := \limsup_{y\to_f x,t\searrow0} \frac{\inf f(y+t(h+\varepsilon B_X))-f(y)}{t}.
	$$
	The Clarke-Rockafellar subdifferential of $f$ at $x\in\dom f$ is defined as:
	\begin{equation}
		\label{eq:cr-subd-def}
		\partial f(x) := \{p\in E^*:\ p(h)\le f^\circ(x;h),\ \forall h\in E\}.
	\end{equation}
	Of course, $\partial f(x) = \varnothing$ if $f(x)=+\infty$.
    It can be shown that $\dom\partial f = \{x\in \dom f: \ f^\circ(x;h) > - \infty,\ \forall h \in E\} = \{x\in \dom f: \ \exists c > 0:\ f^\circ(x;h) \ge - c\|h\|,\ \forall h \in E\}$, and that on the latter set the function $h\to f^\circ (x;h)$ is sublinear, thus convex, and lower semicontinuous function from $E$ to $(-\infty,\infty]$. It is clear that $\partial f(x)$ is convex and $w^*$ closed subset of $E^*$.

	Note that if $f$ is $\lambda$-Lipschitz around $x$, then $\inf f(y+t(h+\varepsilon B_E)) \ge f(y+th) - \lambda t\varepsilon$, so for a Lipschitz function  \eqref{eq:^o-def} reduces to the much simpler Clarke's formula:
	$$
	    f^\circ (x;h) = \limsup_{y\to_f x,t\searrow0} \frac{ f(y+th)-f(y)}{t}.
	$$

%   A lower semicontinuous function $f:E\to \Ri$ is called \emph{(Clarke) tangentially regular } at $x\in \dom f$ if $f^\circ (x;h)=d^-f(x,h)$ for all $h\in E$, where $d^-f(x,h)$ is the Hadamard lower directional derivative of $f$ at $x$ in the direction $h$,  defined as
%   \[
%   d^-f(x,h):=\liminf {t\searrow0,h'\to h} \frac{\inf f(x+th')-f(x)}{t},
%   \]
%  see \cite[Definitiom 2.122]{thibault-book}, and it is called \emph{(Clarke) tangentially regular } if it is so at any point $x\in \dom f$.

     We consider also the Fr\'echet subdifferential of $f$ at $x\in \dom f$:
    $$
        p\in\partial_F f(x) \iff \liminf_{\|h\|\to0}\frac{f(x+h)-f(x)-p(h)}{\|h\|} \ge 0,
    $$
    otherwise $\pa _F(x)=\varnothing$.
    It is easy to check that $\partial_F\subset\partial$, see e.g. \cite[Proposition 4.8(a)]{thibault-book}.

    The function $f:E\to \Ri$ is called \emph{$F$-regular} at $x\in \dom f$ if
    $$
    \partial_F f(x) = \partial f(x),
        $$
   and it is called  \emph{$F$-regular} if it is so at any point $x\in \dom f$.

 The basic example of   $F$-regular functions are the convex continuous functions.

    We will use the following Sum Theorem.
    \begin{thm}[Sum Theorem]
        \label{thm:sum}
		Let $f:E\to\Ri$ be proper,  lower semicontinuous and $F$-regular function. Let $h:E\to\R$ be locally Lipschitz and $F$-regular function. Then $f+h$ is $F$-regular, and
		\begin{equation}
			\label{eq:sum}
			\partial(f+h) (x) = \partial f (x) + \partial h (x),\quad \forall x\in E.
		\end{equation}
	\end{thm}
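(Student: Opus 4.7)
The plan is to sandwich $\partial(f+h)(x)$ between two sums of subdifferentials using two different calculi --- the Clarke--Rockafellar directional-derivative calculus in one direction and the elementary Fréchet calculus in the other --- and then to exploit the $F$-regularity hypothesis to glue them together.

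\textbf{Step 1 (Clarke upper bound).} I would first establish Rockafellar's subadditivity inequality
\begin{equation*}
    (f+h)^\circ(x;v) \le f^\circ(x;v) + h^\circ(x;v), \quad \forall v \in E,
\end{equation*}
valid for any locally Lipschitz $h$ with no regularity needed. The argument is a direct manipulation of \eqref{eq:^o-def}: on the set $y + t(v + \varepsilon B_E)$ the Lipschitz bound with constant $\lambda$ gives $h(z) \le h(y + tv) + \lambda t \varepsilon$, so $\inf_z (f+h)(z) \le \inf_z f(z) + h(y+tv) + \lambda t \varepsilon$; dividing by $t$, observing that $y \to_{f+h} x$ forces $y \to_f x$ because $h$ is continuous, taking $\limsup$ and then $\sup_{\varepsilon > 0}$, yields the inequality. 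Now for $p \in \partial(f+h)(x)$ the definition \eqref{eq:cr-subd-def} yields $p(v) \le f^\circ(x;v) + h^\circ(x;v)$ for every $v$. The excerpt records that on $\dom\partial f$ the function $f^\circ(x;\cdot)$ is sublinear, convex and w*-lsc, and likewise for $h^\circ(x;\cdot)$, so each is the support function of its associated subdifferential. Since $h$ is Lipschitz, $\partial h(x)$ is weak-* bounded, hence $\partial f(x) + \partial h(x)$ is weak-* closed convex with support function $f^\circ(x;v) + h^\circ(x;v)$. The bipolar theorem then forces $p \in \partial f(x) + \partial h(x)$, so $\partial(f+h)(x) \subseteq \partial f(x) + \partial h(x)$.

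\textbf{Step 2 (Fréchet lower bound and conclusion).} By $F$-regularity, $\partial f(x) = \partial_F f(x)$ and $\partial h(x) = \partial_F h(x)$. For $p \in \partial_F f(x)$ and $q \in \partial_F h(x)$, adding the two $\liminf$-inequalities from the definition of the Fréchet subdifferential gives at once $p + q \in \partial_F(f+h)(x)$; and $\partial_F \subseteq \partial$ always. Combining with Step 1,
\begin{equation*}
    \partial(f+h)(x) \subseteq \partial f(x) + \partial h(x) = \partial_F f(x) + \partial_F h(x) \subseteq \partial_F(f+h)(x) \subseteq \partial(f+h)(x),
\end{equation*}
so all four sets coincide. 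In particular $\partial_F(f+h)(x) = \partial(f+h)(x)$, which is the $F$-regularity of $f+h$, while the equality $\partial(f+h)(x) = \partial f(x) + \partial h(x)$ is precisely \eqref{eq:sum}.

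\textbf{Main obstacle.} The nontrivial point is the Clarke--Rockafellar subadditivity estimate in Step 1: it is a two-parameter limit argument (over the moving base-point $y \to_{f+h} x$ and then over $\varepsilon \searrow 0$) in which the Lipschitz error $\lambda\varepsilon$ must be absorbed cleanly into the supremum defining $f^\circ$. The support-function / bipolar closure step then depends crucially on $\partial h(x)$ being weak-* bounded so that the Minkowski sum $\partial f(x) + \partial h(x)$ stays weak-* closed; this is precisely where the local Lipschitzness of $h$ is spent after the directional-derivative inequality is in hand. Degenerate cases present no difficulty: if $\partial f(x) = \varnothing$ there is $v_0$ with $f^\circ(x;v_0) = -\infty$, hence $(f+h)^\circ(x;v_0) = -\infty$, so both sides of \eqref{eq:sum} are empty and the identity is vacuous.
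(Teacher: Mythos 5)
Your proof is correct and follows the same overall architecture as the paper: sandwich $\partial(f+h)(x)$ between the Clarke--Rockafellar sum inclusion $\partial(f+h)\subset\partial f+\partial h$ and the elementary Fr\'echet inclusion $\partial_F f+\partial_F h\subset\partial_F(f+h)$, then use $F$-regularity of $f$ and $h$ to close the chain. The only difference is that the paper simply cites the first inclusion (Theorem 2.98 of Thibault's book) and the Fr\'echet sum rule (Proposition 4.11 there), whereas you prove the former via subadditivity of $(f+h)^\circ(x;\cdot)$ and a support-function argument; your sketch of that step is sound, making your write-up more self-contained but not essentially different.
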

\begin{proof}
First, let $x\in \dom   f$. From \cite[Theorem 2.98]{thibault-book} we have that
\[
\pa (f+h)(x)\subset \pa f(x)+\pa h(x).
\]
To get the opposite inclusion, we use that
\[
\pa f(x)+\pa h(x)=\pa_F f(x)+\pa_F h(x)
\]
by $F$-regularity of both functions, and that
\[
\pa _F f(x)+\pa _F h(x)\subset \pa _F (f+h)(x),
\]
see e.g. \cite[Proposition 4.11]{thibault-book}.
Hence,
\[
\pa f(x)+\pa h(x)\subset \pa _F (f+h)(x)\subset   \pa   (f+h)(x), \quad \forall x\in \dom f.
\]
If $x\not\in \dom f$, then $x\not \in \dom (f+h)$ either, hence $\pa (f+h)(x)=\pa f(x)=\varnothing$. Then $\pa  f(x)+\pa h (x)=\varnothing$ by definition, and \eqref{eq:sum} is established.
\end{proof}

    $F$-regularity provides a concise condition allowing the transfer of subdifferential inclusions to slope inequalities.
    \begin{prop}
        \label{pro:subdf-to-slope}
       Let $f:E\to\Ri$  be proper, lower semicontinuous and $F$-regular. Then
       \begin{equation}
           \label{eq:dom-coinc}
           \dom|\nabla f| = \dom\partial f ,
       \end{equation}
       and
       \begin{equation}
           \label{eq:slope-charact}
           |\nabla f|(x) = \min \{\|p\|:\ p\in\partial f(x)\},\quad\forall x\in\dom|\nabla f| .
       \end{equation}
    \end{prop}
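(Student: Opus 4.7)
The plan is to prove two separate inequalities which together yield both equalities \eqref{eq:dom-coinc} and \eqref{eq:slope-charact}.

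First I would show that every $p \in \partial f(x)$ satisfies $|\nabla f|(x) \le \|p\|$, which produces the inclusion $\dom \partial f \subseteq \dom |\nabla f|$ together with the upper bound $|\nabla f|(x) \le \inf\{\|p\|:\ p \in \partial f(x)\}$. By $F$-regularity, $\partial f(x) = \partial_F f(x)$, so for any such $p$ and any $\eta > 0$ there is $\delta > 0$ with $f(x+h) \ge f(x) + p(h) - \eta \|h\|$ whenever $\|h\| < \delta$. Rearranging and using $-p(h) \le \|p\|\|h\|$ gives $[f(x) - f(x+h)]^+ \le (\|p\| + \eta)\|h\|$; taking the $\limsup$ in the definition of local slope and then letting $\eta \to 0$ produces $|\nabla f|(x) \le \|p\|$, as desired.

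For the converse direction, fix $x \in \dom |\nabla f|$ and set $r := |\nabla f|(x)$. The definition of the local slope guarantees that for every $\eta > 0$ there exists $\delta > 0$ with $f(y) + (r+\eta)\|y - x\| \ge f(x)$ on $B(x;\delta)$, so $x$ is a local minimum of $\varphi_\eta := f + (r+\eta)\|\cdot - x\|$. This yields $0 \in \partial_F \varphi_\eta(x) \subseteq \partial \varphi_\eta(x)$. Since the convex continuous function $\|\cdot - x\|$ is both Lipschitz and $F$-regular, Theorem~\ref{thm:sum} applies and gives
$\partial \varphi_\eta(x) = \partial f(x) + (r+\eta)\, \partial \|\cdot - x\|(x) = \partial f(x) + (r+\eta) B_{E^*}$,
where I use that the Clarke--Rockafellar subdifferential of a convex continuous function coincides with the convex one. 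Hence there is $p_\eta \in \partial f(x)$ with $\|p_\eta\| \le r + \eta$, which already shows $\partial f(x) \ne \varnothing$ and so $\dom |\nabla f| \subseteq \dom \partial f$.

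It remains to replace the family of approximate subgradients $(p_\eta)_{\eta>0}$ by a single $p \in \partial f(x)$ with $\|p\| \le r$. For each $\eta \in (0,1]$ the set $K_\eta := \partial f(x) \cap (r+\eta) B_{E^*}$ is nonempty, $w^*$-closed (as the intersection of two $w^*$-closed sets), and contained in the $w^*$-compact ball $(r+1) B_{E^*}$. The family $(K_\eta)_{0 < \eta \le 1}$ is decreasing, so by the finite intersection property $\bigcap_{\eta > 0} K_\eta = \partial f(x) \cap r B_{E^*}$ is nonempty, producing the required $p$. The main obstacle I anticipate is precisely this compactness step: the Sum Theorem only yields approximate subgradients indexed by $\eta$, and without $w^*$-compactness of bounded sets in $E^*$ one cannot generally turn the infimum into an attained minimum. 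The other delicate ingredient is the double use of $F$-regularity -- once to pass from a Fr\'echet inequality to slope bounds via $\partial_F \subseteq \partial$, and once more to legitimise the application of the Sum Theorem to the perturbed function $\varphi_\eta$.
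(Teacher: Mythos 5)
Your proof is correct and follows essentially the same route as the paper: one direction via the Fr\'echet inequality supplied by $F$-regularity, the other via the local minimality of $f+(r+\eta)\|\cdot-x\|$ combined with the Sum Theorem, and Banach--Alaoglu to turn the infimum into an attained minimum. The only cosmetic differences are that the paper treats the two inclusions in the opposite order and dispatches the attainment in a one-line remark ($w^*$-compactness of $\partial f(x)$ intersected with a dual ball, plus $w^*$-lower semicontinuity of the dual norm) rather than through your nested family $K_\eta$.
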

    \begin{proof}
        Note that, because the intersection of $\partial f(x)$ with a dual ball is $w^*$-compact, and the dual norm is $w^*$ upper semicontinuous, the minimum in the right hand side of \eqref{eq:slope-charact} is really attained.

        Let $x_0\in\dom|\nabla f|$ and let $r>|\nabla f|(x_0)$ be arbitrary. By definition, the function
        $$
          u(x) := f(x) + r\|x-x_0\|
        $$
        attains a local minimum at $x_0$, so $0\in\partial u(x_0)$. By Sum Theorem -- Theorem~\ref{thm:sum} -- there are $p\in\partial f(x_0)$ and $q\in \pa r\|\cdot -x_0\|(x_0)$ such that $\|q\|\le r$, and $p+q=0$. Clearly,  $\|p\|\le r$. Thus we get
        \begin{equation}
            \label{eq:dom-nab-in-pa}
            \dom|\nabla f| \subset \dom\partial f,
        \end{equation}
        and, taking infimum over $r > |\nabla f|(x_0)$ we obtain that
        \begin{equation}
            \label{eq:nab-ge-pa}
            |\nabla f| (x_0) \ge \|p\| \ge \min \{\|p\|:\ p\in\partial f(x_0)\}.
        \end{equation}
        Note that for these we did not use the $F$-regularity of $f$.

        Let now $x_0\in\dom\partial f$ and let $p_0\in\partial f(x_0)$ be such that
        $$
            \|p_0\| = \min \{\|p\|:\ p\in\partial f(x_0)\}.
        $$
        Since $f$ is $F$-regular, $p_0\in\partial_Ff(x_0)$, that is,
        $$
            f(x) - f(x_0) \ge p_0(x-x_0) + \alpha(x)\|x-x_0\|,
        $$
        where $\alpha(x) \to 0$, as $x\to x_0$. So, $f(x) - f(x_0) \ge - (\|p_0\|- \alpha(x))\|x-x_0\|$, and
        $$
            \liminf_{x\to x_0} \frac{f(x)-f(x_0)}{\|x-x_0\|} \ge -\|p_0\|,
        $$
        meaning that
        \begin{equation}
            \label{eq:nab-le-p0}
            |\nabla f|(x_0) \le \|p_0\|.
        \end{equation}
        So, $x_0\in\dom|\nabla f|$ and, since $x_0\in\dom\partial f$ was arbitrary, we have $\dom\partial f \subset \dom|\nabla f|$, which together with \eqref{eq:dom-nab-in-pa} gives \eqref{eq:dom-coinc}. On the other hand, \eqref{eq:nab-ge-pa} and \eqref{eq:nab-le-p0} give \eqref{eq:slope-charact}.
    \end{proof}

    \section{Primal lower regular functions}
    \label{sec:plr}
We start with some preliminary results concerning primal lower regular functions defined on a Banach space $(E,\|\cdot\|)$.

First, observe that if  $\bar x \in\plr_{c,\delta}f$, then $f$ is $F$-regular on  $  B^\circ(\bar x;\delta)$, see \cite[Thorem 11.16(a)]{thibault-book}.
\begin{lemm}
    \label{lem:af-plr}
    If $\bar x \in\plr_{c,\delta}f$ for some $c,\delta>0$, and $\alpha\in(0,1)$, then
    $$
        \bar x \in \plr_{c,\delta}(\alpha f).
    $$
\end{lemm}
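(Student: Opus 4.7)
The plan is to use the scaling property of the Clarke-Rockafellar subdifferential, namely $\partial(\alpha f)(x) = \alpha \partial f(x)$ for $\alpha > 0$, which follows directly from the identity $(\alpha f)^\circ(x;h) = \alpha f^\circ(x;h)$ in the definition \eqref{eq:^o-def}, combined with the positive homogeneity in the defining inequality \eqref{eq:cr-subd-def}. Thus every $q \in \partial(\alpha f)(x)$ has the form $q = \alpha p$ for some $p \in \partial f(x)$, with $\|q\| = \alpha \|p\|$.

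Given this, fix $x,y\in B^\circ(\bar x;\delta)$ and $q\in\partial(\alpha f)(x)$, and write $q = \alpha p$ with $p\in\partial f(x)$. I multiply the primal lower regular inequality \eqref{eq:plr-def} for $f$ by $\alpha > 0$:
\[
\alpha f(y) \ge \alpha f(x) + \alpha p(y-x) - \alpha c (1 + \|p\|) \|y-x\|^2,
\]
which, after substituting $\alpha p = q$ and $\alpha \|p\| = \|q\|$, rearranges to
\[
(\alpha f)(y) \ge (\alpha f)(x) + q(y-x) - c(\alpha + \|q\|)\|y-x\|^2.
\]

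The remaining step is to absorb the coefficient: since $\alpha \in (0,1)$, we have $\alpha + \|q\| \le 1 + \|q\|$, so
\[
c(\alpha + \|q\|)\|y-x\|^2 \le c(1+\|q\|)\|y-x\|^2,
\]
and the desired inequality defining membership in $\plr_{c,\delta}(\alpha f)$ follows. There is no real obstacle here; the only point worth flagging is the subdifferential scaling identity, which is the ingredient that makes the bookkeeping in the second term match the $(1+\|q\|)$ form required by Definition 1, and the sign condition $\alpha < 1$ is used only at this final comparison (in fact the argument shows the slightly stronger statement that $\bar x \in \plr_{c\alpha,\delta}(\alpha f)$ would hold if one replaced $1+\|p\|$ by $\|p\|$, but with the stated form of the inequality, $\alpha \le 1$ is what we need).
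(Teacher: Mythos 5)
Your proof is correct and is essentially identical to the paper's: both multiply \eqref{eq:plr-def} by $\alpha$, use $\partial(\alpha f)=\alpha\,\partial f$ to rewrite the gradient term, and absorb the resulting coefficient $c(\alpha+\|q\|)$ into $c(1+\|q\|)$ using $\alpha<1$. Nothing further is needed.
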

\begin{proof}
  Let $x,y\in B^\circ(\bar x;\delta)$ and $p\in\partial f(x)$ be arbitrary. Multiplying \eqref{eq:plr-def} by $\alpha > 0$, and using that $p_1 := \alpha p \in \partial (\alpha f)(x)$, we get
    \begin{eqnarray*}
        \alpha f(y) &\ge& \alpha f(x) + p_1(y-x) -c(\|p_1\| + \alpha) \|y-x\|^2\\
        &\ge& \alpha f(x) + p_1(y-x) -c(\|p_1\| + 1) \|y-x\|^2.
    \end{eqnarray*}
    Because $\partial (\alpha f) = \alpha \partial f$, we are done.
\end{proof}
\begin{lemm}
    \label{lem:pln+conv}
    Let $f:E\to\Ri$ be a proper and lower semicontinuous function, and let $h:E\to\R$ be a convex and continuous function.

    If
    $$
        \bar x \in \plr_{c,\delta} f \ \ \text{ and }\ \  \lip h(B^\circ(\bar x;\delta)) = L,
    $$
    then
    $$
        \bar x \in \plr_{c(L+1),\delta} (f + h).
    $$
\end{lemm}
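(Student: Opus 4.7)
The plan is to use the Sum Theorem from Section~\ref{sec:clarke} to split a subgradient of $f+h$ into pieces from $f$ and $h$, combine the PLR inequality for $f$ with the convex subgradient inequality for $h$, and finally absorb the Lipschitz constant $L$ into the coefficient.

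First I note that on $B^\circ(\bar x;\delta)$ the function $f$ is $F$-regular (this is what the excerpt recorded from \cite[Theorem 11.16(a)]{thibault-book}), while $h$ is convex and continuous, hence $F$-regular as well. Theorem~\ref{thm:sum} therefore gives $\partial(f+h)(x) = \partial f(x) + \partial h(x)$ for every $x\in B^\circ(\bar x;\delta)$. Fix $x,y\in B^\circ(\bar x;\delta)$ and $q\in\partial(f+h)(x)$, and write $q = p+r$ with $p\in\partial f(x)$ and $r\in\partial h(x)$. Because $h$ is $L$-Lipschitz on $B^\circ(\bar x;\delta)$ and $r$ is a convex subgradient at an interior point, $\|r\|\le L$.

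Next I apply the PLR inequality \eqref{eq:plr-def} to the pair $(p,f)$ and the standard convex subgradient inequality to $(r,h)$. Summing the two gives
$$
(f+h)(y) \ge (f+h)(x) + q(y-x) - c(1+\|p\|)\|y-x\|^2 .
$$
It remains to dominate the curvature term. From $\|p\|=\|q-r\|\le \|q\|+L$ we obtain
$$
1+\|p\| \le 1+\|q\|+L \le (L+1)(1+\|q\|),
$$
the last inequality because $(L+1)(1+\|q\|) = (1+\|q\|) + L + L\|q\|$. Substituting yields exactly the PLR inequality for $f+h$ with coefficient $c(L+1)$, so $\bar x\in\plr_{c(L+1),\delta}(f+h)$.

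The only non-trivial ingredient is the subdifferential sum formula, which in turn hinges on $F$-regularity of $f$ on the PLR ball; once that is granted, the rest is a one-line bound on $\|p\|$ in terms of $\|q\|$ and $L$.
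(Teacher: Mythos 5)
Your proof is correct and follows essentially the same route as the paper: decompose a subgradient of $f+h$ via the Sum Theorem, add the PLR inequality for $f$ to the convex subgradient inequality for $h$, and absorb $L$ through the bound $1+\|p\|\le(L+1)(1+\|q\|)$. The only cosmetic difference is that you spell out why the Sum Theorem applies ($F$-regularity of $f$ on the PLR ball and of the convex $h$), which the paper leaves implicit.
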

\begin{proof}
    Denote $u:=f+h$ and fix arbitrary $x\in B^\circ(\bar x;\delta)$ and $\xi\in\partial u(x)$.

    From the Sum Theorem -- Theorem~\ref{thm:sum} -- we have that $\xi = p + q$ where $p\in\partial f(x)$ and $q\in\partial h(x)$. The latter means
    $$
        h(y)\ge h(x) + \l q,y - x\r,\;\forall y\in X.
    $$
    Adding this and \eqref{eq:plr-def} together yields
    \be\label{eq:pln+convex_h_eq}
        u(y)\ge u(x) + \l \xi,y - x\r - c\left(\|p\|+1\right)\|y - x\|^2,\quad\forall y\in B^\circ(\bar x;\delta).
    \ee
    Note that, since $\|p\| = \|\xi-q\| \le \|\xi\|+\|q\|$,
    \[
        \|p\|+1 \le \|\xi\|+\|q\|+1 \le (\|q\| +1)(\|\xi\|+1),
    \]
    and because $\|q\|\le L$, the Lipshitz constant of $h$ around $\bar x$, \eqref{eq:pln+convex_h_eq} gives
    \[
        u(y) \ge u(x) + \l\xi,y-x\r - c(L +1)(\|\xi\|+1)\|y - x\|^2,\quad\forall y\in B^\circ(\bar x;\delta).
    \]
\end{proof}

\begin{lemm}
    \label{lem:plr-to-sharp}
    Let $f:X\to\Ri$ be a proper and lower semicontinuous function. Let
    \begin{equation}
        \label{eq:ps-x-plr}
        \bar x \in \plr_{c,\delta} f,
    \end{equation}
    for some $c , \delta > 0$,
    and let $p\in\partial f(\bar x)$ be such that
    \begin{equation}
        \label{eq:ps-p<1}
        \| p \| < 1.
    \end{equation}
    For the function
    \begin{equation}
        \label{eq:ps-f1-def}
        f_1(x) := f(x) - p(x) + 4 \|x-\bar x\|
    \end{equation}
    it is fulfilled that
    $$
        \bar x \in \plr_{c',\delta'} f_1,
    $$
    with
    \begin{equation}
        \label{eq:ps-cdel-prim}
        c' = 6c,\quad \delta' = \min\left\{\delta,\frac{1}{9c}\right\}.
    \end{equation}
   Moreover, $f_1(\bar x) \le \inf f_1(B^\circ(\bar x;\delta'))$, and
    \begin{equation}
        \label{eq:ps-sharp-min}
        |\nabla f_1|(x) \ge 1,\quad\forall x\in B^\circ(\bar x;\delta') \setminus \{\bar x\}.
    \end{equation}

    \begin{proof}
        Assume that, contrary to \eqref{eq:ps-sharp-min}, there is $x\in B^\circ(\bar x;\delta') \setminus \{\bar x\}$ such that $|\nabla f_1|(x) < 1$. From Proposition~\ref{pro:subdf-to-slope}, see \eqref{eq:slope-charact}, there is $\xi\in \partial f_1(x)$ such that $\|\xi\| < 1$. By Sum Theorem -- Theorem~\ref{thm:sum} -- we have that
        $$
            \xi = q - p + r,
        $$
        where $q\in\partial f(x)$ and $r\in\partial (4\|\cdot-\bar x\|)(x)$. It is immediate that, since $x\neq\bar x$,
        $$
            \|r\| = 4,\text{ and }r(\bar x - x) = -4\|\bar x - x\|.
        $$
        Since $\|\xi\|<1$ and $\|p\| < 1$, we have
        $$
            q(\bar x - x) = (\xi+p)(\bar x - x) - r (\bar x -x) \ge 2\|\bar x - x\|.
        $$
        By definition and using that $\|q\| \le 2 +\|r\| = 6$,
        \begin{eqnarray*}
            f(\bar x) &\ge& f(x) + q(\bar x - x) - c(\|q\| + 1)\|\bar x - x\|^2\\
            &\ge& f(x) + 2\|\bar x - x\| - 7 c \|\bar x - x\|^2.
        \end{eqnarray*}
        But, again by definition, and using that $\|p\| < 1$,
        \begin{eqnarray*}
            f(x) &\ge& f(\bar x) + p(x - \bar x) - c(\|p\| + 1)\|x - \bar x\|^2\\
            &>& f(\bar x) - \|x - \bar x\| - 2 c \|x - \bar x\|^2.
        \end{eqnarray*}

         Now, putting the above two together, we get
        $$
            0 > \|x - \bar x\| - 9c \|x - \bar x\| ^2,
        $$
        meaning that $\|x - \bar x\| > 1/9c \ge \delta'$, see \eqref{eq:ps-cdel-prim}, contradiction.

        Since the function
        $$
         h(x)=   -p(x) + 4 \|x-\bar x\|
        $$
        is convex and globally Lipschitz with constant less or equal to $ \|p\| + 4 < 5$, Lemma~\ref{lem:pln+conv} immediately gives that $\bar x \in \plr_{c',\delta} f_1$ with $c'$ given by \eqref{eq:ps-cdel-prim}. Since $\delta' \le \delta$, we are done.

        Moreover,        for any $x\in B^\circ(\bar x;\delta')$ it holds that
        \[
       f(x) \ge f(\bar x) + p(x - \bar x) - c(\|p\| + 1)\|x - \bar x\|^2.
        \]
      Hence, having in mind that $\|p\|<1$,
  \[
       f(x)-p(x) \ge f(\bar x) - p(\bar x) - 2c \|x - \bar x\|^2,
        \]
  \[
       f(x)-p(x) +4\| x-\bar x\| \ge f(\bar x) - p(\bar x) +4\| x-\bar x\|- 2c \|x - \bar x\|^2,
        \]
  \[
       f_1(x) \ge f_1(\bar x)  +2\| x-\bar x\|(2- c \|x - \bar x\|).
        \]

  Since  $2c \|x - \bar x\| < 1$ for $x\in B^\circ(\bar x;\delta')$, we have per force that $f_1(\bar x) \le \inf f_1(B^\circ(\bar x;\delta'))$.
    \end{proof}

    \begin{lemm}
        \label{lem:determ}
        Let the functions $f,g:E\to\Ri$ be proper and lower semicontinuous, and let for some $c,\delta > 0$,
		\begin{equation}
			\label{eq:x-bar-in-plr-fg-and-dom}
			\bar x \in \plr _{c,\delta} f \cap \plr_{c,\delta}g\cap\dom\partial f.
		\end{equation}
       Let
        \begin{equation}
            \label{eq:subd-incl}
            \partial f(x) \subset \partial g(x),\quad \forall x \in B^\circ(\bar x; \delta).
        \end{equation}
        Then
        \begin{equation}
			\label{eq:main-determ}
			f(x) \ge g(x) + (f(\bar x) - g(\bar x)),\quad \forall x \in B^\circ(\bar x;  \delta'),
		\end{equation}
        where $\delta'$ is given by \eqref{eq:ps-cdel-prim}.
    \end{lemm}
    \begin{proof}
        First, let us note that from \eqref{eq:x-bar-in-plr-fg-and-dom} and \eqref{eq:subd-incl} it follows that $\bar x \in \dom g$, so the right hand side of \eqref{eq:main-determ} is well defined.

        We can assume without loss of generality that $\bar x = 0$ and $f(\bar x) = g(\bar x) = 0$.

        Fix an arbitrary $x_0\in \delta'B^\circ_E$. We need to show that $f(x_0) \ge g(x_0)$, so we can assume without loss of generality that $f(x_0) < \infty$.

        Let $\nu\in(0,1)$ be such that
        $$
            \nu + \|x_0\| < \delta'.
        $$
        From Proposition~\ref{pro:subdf-to-slope} it follows that $0\in \dom |\nabla f|$. Pick $\alpha \in (0,1)$ such that
        \begin{equation}
            \label{eq:alpha-choice}
            |\nabla (\alpha f)|(0) < \min\{1,\nu/\delta '\},\ \inf (\alpha f)(\delta'B_E) > -\nu,\ (\alpha f)(x_0) < \nu.
        \end{equation}
        Note that by definition $f$ is bounded below on $\delta' B_E$, so such choice is possible. From Lemma~\ref{lem:af-plr} and \eqref{eq:x-bar-in-plr-fg-and-dom} we have that
        \begin{equation}
            \label{eq:plr-alfa-fg}
            0 \in \plr _{c,\delta} (\alpha f) \cap \plr _{c,\delta} (\alpha g).
        \end{equation}
        From \eqref{eq:alpha-choice} and Proposition~\ref{pro:subdf-to-slope}, see \eqref{eq:slope-charact}, it follows that there is a $p\in\partial (\alpha f)(0)$ such that
        \begin{equation}
            \label{eq:p<min...}
            \|p\| < \min\{1,\nu/\delta '\}.
        \end{equation}
        In particular, \eqref{eq:ps-p<1} holds. Therefore, Lemma~\ref{lem:plr-to-sharp} gives that
        $$
            0 \in \plr _{c',\delta'} f_1,
        $$
        where $f_1(x)=\alpha f(x)+h(x)$, $h(x)=-p(x)+4\|x\|$,  and $c'=6c$, see \eqref{eq:ps-cdel-prim}. Moreover, $\inf f_1(\delta'B_E^\circ) \ge 0$ and
        \begin{equation}
            \label{eq:0-sharp}
            |\nabla f_1| (x) \ge 1,\quad \forall x\in\delta' B_E^\circ \setminus\{0\}.
        \end{equation}

        Define
        $$
            g_1(x) := \alpha g(x) +h(x).
        $$
        Using \eqref{eq:x-bar-in-plr-fg-and-dom}, \eqref{eq:subd-incl} and Lemma~\ref{lem:plr-to-sharp}, we get
        \begin{equation}
            \label{eq:0-plr-g}
            0 \in \plr_{c',\delta'} g_1.
        \end{equation}
      Since $\pa (\alpha f)=\alpha \pa f$, and $\pa (\alpha g)=\alpha \pa g$ from \eqref{eq:subd-incl} it holds that $\partial (\alpha f) \subset   \partial (\alpha g)$. Now the Sum Theorem -- Theorem~\ref{thm:sum} -- gives that
        $$
            \partial f_1 (x) =\pa \alpha f(x)+\pa h(x)\subset \pa \alpha g(x)+\pa h(x) =\partial g_1(x),\quad\forall x\in\delta B_E^\circ.
        $$
        From Proposition~\ref{pro:subdf-to-slope} it follows that
        \begin{equation}
            \label{eq:slope-fg-1}
            |\nabla f_1|(x) \ge |\nabla g_1|(x),\quad\forall x\in\delta B_E^\circ.
        \end{equation}
        Set
        $$
            Y := \{x\in\delta 'B_E^\circ: f_1(x) \le f_1(x_0)\}.
        $$
        Since $0$ is the minimum of $f_1$ on $\delta 'B_E^\circ$, we have that
        \begin{equation}
            \label{eq:0-in-Y}
            0\in Y.
        \end{equation}
        We claim that
        $$
            Y \subset (\nu + \|x_0\|) B_E.
        $$
        Indeed, if $x$ is such that
        $$
            \nu + \|x_0\| < \|x\| < \delta',
        $$
        then, see \eqref{eq:alpha-choice} and \eqref{eq:p<min...},
        \begin{eqnarray*}
        f_1(x) &=& \alpha f(x)-p(x)+4\|x\| \\
         &>&-\nu -\frac {\nu}{\delta '}\delta '+4(\nu +\| x_0\| \\
         &=&  2\nu + 4\|x_0\|,
         \end{eqnarray*}
          while $f_1(x_0) < \nu +\frac {\nu}{\delta '}\delta '  + 4\|x_0\|=2\nu +4\|x_0\|$, so $x\not\in Y$.

        This means that $Y$ is closed, because $f_1$ is lower semicontinuous. So,  with respect to the canonical metric induced by $\|\cdot\|$, $Y$ is a complete metric space. In addition, Lemma~\ref{lem:nabla-restrict} gives that for
        $$
            \ol f_1 := f_1 \upharpoonright Y,
        $$
        we have
        \begin{equation}
            \label{eq:nablaf2=f1}
            |\nabla \ol f_1| (x) = |\nabla f_1 |(x), \quad\forall x\in Y.
        \end{equation}
        Set
        $$
            \ol g_1 := g_1 \upharpoonright Y.
        $$
        Obviously, $|\nabla \ol g_1|(x) \le |\nabla g_1|(x)$, for all $x\in Y$, so \eqref{eq:slope-fg-1} gives
        $$
            |\nabla \ol f_1|(x) \ge |\nabla \ol g_1|(x),\quad \forall x\in Y.
        $$
        From this and \eqref{eq:0-plr-g} we have that for all $x\in\dom |\nabla \ol f_1|\subset \dom |\nabla \ol g_1|$, and all $y\in Y$ we have
      \begin{eqnarray*}
            \ol g_1(y) &\ge& \ol g_1(x) - |\nabla \ol g_1|(x)\|y-x\| - c(|\nabla \ol g_1|(x)+1)\|y-x\|^2\\
    &\ge& \ol g_1(x) - |\nabla \ol f_1|(x)\|y-x\| - c(|\nabla \ol f_1|(x)+1)\|y-x\|^2
    \end{eqnarray*}
        Finally, since $0$ is the minimum of $\ol f_1$ on $Y$, it is sharp by \eqref{eq:0-sharp} and \eqref{eq:nablaf2=f1}, and we can apply Theorem~\ref{thm:slope-main} to conclude that $\ol f_1\ge \ol g_1$ on $Y$. Since $x_0\in Y$, this means $\alpha f(x_0)+p(x_0)+4\|x_0\| \ge \alpha g(x_0)+p(x_0)+4\|x_0\|$, therefore $f(x_0) \ge g(x_0)$ and we are done.
    \end{proof}

    \begin{proof}[\emph{\textbf{Proof of Theorem~\ref{thm:main}}}]
        We again assume that $\bar x = 0$.

        From \eqref{eq:def-hat-del} and \eqref{eq:ps-cdel-prim} it is clear that $\hat \delta = \delta'/2$.

        For any fixed $x\in \hat\delta B^\circ_E$ such that $|\nabla f|(x)<\infty$ (and there are such $x$'s because of the graphical density of  $\dom|\nabla f|$   in $\dom f$, see e.g. \cite[Lemma~3.7.]{iz-slopes}), Lemma~\ref{lem:determ} applied to $f$ and $g$ and then to $g$ and $f$ on $B^\circ(x;\hat\delta)$ shows that:
        $$
            f(y) = g(y) + (f(x)-g(x)),\quad \forall y\in B^\circ(x;\hat\delta).
        $$
        But $0\in B^\circ(x;\hat\delta)$, so
        $$
            f(0) = g(0) +  (f(x)-g(x)) \iff f(x)-g(x) = f(0) - g(0).
        $$
        That is, for $a := f(0) - g(0)$ we have
        $$
            f(y) = g(y) + a,\quad\forall  y\in B^\circ(x;\hat\delta),\ \forall x \in \hat\delta B^\circ_E \cap \dom |\nabla f|.
        $$
        But because $\dom|\nabla f|$ is dense in $\dom f$ and $0\in\dom f$
        $$
            \hat\delta B^\circ_E \subset \left(\hat\delta B^\circ_E \cap \dom |\nabla f|\right) + \hat\delta B^\circ_E.
        $$
 The proof is then complete.   \end{proof}
\end{lemm}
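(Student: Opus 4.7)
The plan is to handle the three claims of the lemma in order: primal lower regularity of $f_1$, the infimum bound at $\bar x$, and finally the slope estimate, which I expect to be the main work.

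For $\bar x \in \plr_{c',\delta'} f_1$, I would write $f_1 = f + h$ with $h(x) := -p(x) + 4\|x - \bar x\|$. The function $h$ is convex and globally Lipschitz with constant at most $\|p\| + 4 < 5$, so in particular $\lip h(B^\circ(\bar x;\delta)) \le 5$. Lemma~\ref{lem:pln+conv} then immediately yields $\bar x \in \plr_{6c, \delta} f_1$, and a fortiori $\bar x \in \plr_{6c, \delta'} f_1$ since $\delta' \le \delta$.

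For the infimum bound at $\bar x$, I instantiate \eqref{eq:plr-def} with base $x = \bar x$, subgradient $p \in \partial f(\bar x)$, and arbitrary target $y = x \in B^\circ(\bar x;\delta')$, obtaining $f(x) \ge f(\bar x) + p(x - \bar x) - c(1+\|p\|)\|x-\bar x\|^2$. Using $\|p\| < 1$ and adding $-p(x) + 4\|x-\bar x\|$ to both sides rearranges this to $f_1(x) \ge f_1(\bar x) + 2\|x-\bar x\|(2 - c\|x-\bar x\|)$. The choice $\delta' \le 1/(9c)$ forces $c\|x-\bar x\| < 1/9$, making the last factor positive, so $f_1(x) \ge f_1(\bar x)$.

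The sharp-slope inequality $|\nabla f_1|(x) \ge 1$ on $B^\circ(\bar x;\delta') \setminus \{\bar x\}$ is the real obstacle, and I plan a contradiction argument. Suppose some $x \in B^\circ(\bar x;\delta') \setminus \{\bar x\}$ has $|\nabla f_1|(x) < 1$. Because $f$ is $F$-regular on $B^\circ(\bar x;\delta)$ (by primal lower regularity, as recalled at the start of Section~\ref{sec:plr}) and $h$ is convex and continuous, hence $F$-regular, Theorem~\ref{thm:sum} makes $f_1$ $F$-regular there as well, so Proposition~\ref{pro:subdf-to-slope} furnishes $\xi \in \partial f_1(x)$ with $\|\xi\| < 1$. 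Decomposing via the Sum Theorem, $\xi = q - p + r$ with $q \in \partial f(x)$ and $r \in \partial(4\|\cdot-\bar x\|)(x)$. Since $x \ne \bar x$, the subdifferential of the norm term evaluates cleanly: $\|r\| = 4$ and $r(\bar x - x) = -4\|\bar x - x\|$.

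I then extract the two inequalities whose combination is meant to explode. From $q(\bar x - x) = (\xi + p)(\bar x - x) - r(\bar x - x)$ together with $\|\xi\|, \|p\| < 1$, I get $q(\bar x - x) \ge (4 - 2)\|\bar x - x\| = 2\|\bar x - x\|$ and the crude bound $\|q\| \le \|\xi\| + \|p\| + \|r\| < 6$. Feeding $q$ into \eqref{eq:plr-def} with base $x$ and target $\bar x$ gives $f(\bar x) \ge f(x) + 2\|\bar x - x\| - 7c\|\bar x - x\|^2$; feeding $p$ with base $\bar x$ and target $x$ gives $f(x) > f(\bar x) - \|x-\bar x\| - 2c\|x-\bar x\|^2$. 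Summing cancels the $f$-values and yields $0 > \|x - \bar x\|(1 - 9c\|x - \bar x\|)$, forcing $\|x - \bar x\| > 1/(9c) \ge \delta'$ and contradicting $x \in B^\circ(\bar x;\delta')$. This is precisely why the radius $\delta' = \min\{\delta, 1/(9c)\}$ is chosen as it is.
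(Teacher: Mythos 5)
Your proposal is correct and follows essentially the same route as the paper's proof: the same decomposition $f_1 = f + h$ fed into Lemma~\ref{lem:pln+conv}, the same rearrangement of \eqref{eq:plr-def} at $\bar x$ for the infimum bound, and the same contradiction argument via $\xi = q - p + r$ and the two primal-lower-regularity inequalities summed to give $0 > \|x-\bar x\|(1 - 9c\|x-\bar x\|)$. The only (welcome) difference is that you explicitly justify the $F$-regularity of $f_1$ needed to invoke Proposition~\ref{pro:subdf-to-slope}, a step the paper leaves implicit.
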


\bigskip

\textbf{Acknowledgements.} The authors dedicate this article to Prof. Lionel Thibault and express their sincere gratitude for his inspiration and constant support.

\bigskip

\textbf{Funding.} The work of M. Ivanov and N. Zlateva is supported by the European Union-NextGenerationEU, through the National Recovery and Resilience Plan of the Republic of Bulgaria, project SUMMIT BG-RRP-2.004-0008-C01. 

The work of M. Konstantinov is supported by the Bulgarian Ministry of Education and Science under the National
Research Programme Young scientists and postdoctoral students approved by DCM \#206/07.04.2022.

\end{document}